\theoremstyle{plain}
\newtheorem{theorem}{Theorem}[section]
\newtheorem{corollary}{Corollary}[section]
\newtheorem{lemma}{Lemma}[section]
\newtheorem{proposition}{Proposition}[section]
\theoremstyle{definition}
\newtheorem{definition}{Definition}[section]
\newtheorem{problem}{Problem}[section]
\newtheorem{example}{Example}[section]
\theoremstyle{remark}
\title{Bilateral facial reduction: qualification-free subdifferential calculus and exact duality}
\author{Matthew S. Scott}
\begin{document}
\maketitle
\begin{abstract}
Qualification conditions (also termed constraint qualifications) help avoid pathological behavior at domain boundaries in convex analysis. By generalizing facial reduction from conic programming to general
convex programs of the
form $f(x) + g(Ax)$, we provide qualification-free
generalizations of several key results: an exact Fenchel-Rockafellar
dual, 
KKT optimality conditions, 
an attained infimal convolution for the conjugate of a sum,
subdifferential sum and chain rules, and normal cones of intersections.
All our results reduce seamlessly to their original formulations when qualification conditions hold.

The core insight is that for a sum of
two convex functions, there is an affine subspace---the joint supporting subspace---
that contains the feasible region, and such that qualification conditions
hold when restricting the effective domain of each function to it. We offer a number of characterizations for the joint supporting subspace, including 
one that obtains the affine subspace via iterative, bilateral reduction between the two domains.
In our proofs, which are self-contained, we develop
a structured induction on faces 
where inductive steps are associated with normal vectors nested in 
supporting subspaces (a generalization of supporting hyperplanes). 
With this tool, we characterize the facial structure of the difference of two convex
sets from the 
facial structures of the individual convex sets.
\end{abstract}
\noindent\textbf{Keywords:} facial reduction, subdifferential calculus, exact duality, qualification conditions, constraint qualifications, convex analysis
\section{Introduction}
\label{loc:body.introduction}
In convex analysis, many results require so-called \emph{qualification conditions}
(sometimes called constraint qualifications),
such as 
\begin{equation}
\label{eq:intro:ri}
\rint\dom(f) \cap \rint\dom(g) \neq \emptyset,
\end{equation}
This condition can be understood as ``avoiding the boundary"; it ensures that the feasible region $\dom(f + g)$ 
is not contained in the relative boundary of either $\dom(f)$ or $\dom(g)$. 
When \Cref{eq:intro:ri} fails to hold, the geometry of the boundary
of the convex sets $\dom(f)$ and $\dom(g)$ starts to matter, and such cases may require
special treatment.

Qualification conditions are not merely theoretical artifacts; they are
justified by counterexamples, and therefore
reflect more or less accurately the true scope of applicability of the
associated formulas.
We introduce one such counterexample for the sum rule of subdifferentials.
(We will later show how our theory resolves this case.)
Take the function $f(x) := -\sqrt{x}$ when $x \ge 0$ and $f(x) := \infty$ otherwise.
Consider the indicator function $g(x)= \indicator_{(-\infty,  0]}(x)$, 
which is $0$ on $(-\infty, 0]$ and $\infty$ elsewhere.
Naive application of the sum rule yields 
\begin{equation*}
\partial (f+g)(0) =  \partial f(0) + \partial g(0)= \emptyset + \mathbb{R}_+ =
\emptyset,
\end{equation*}
when in fact, $\partial (f+g)(0)= \partial \indicator_{\{0\}}(0) = \mathbb{R}$, so the sum
rule does not hold in this case.
There is no contradiction in this failure because 
the qualification condition in \Cref{eq:intro:ri} is not satisfied in this case (otherwise, the sum rule would be guaranteed by \cite[Theorem 23.8]{rockafellarConvexAnalysis1970}).
Indeed, the relative interiors
of the domains are $(-\infty,  0)$ and $(0, \infty)$,
and do not intersect.

Our approach generalizes facial reduction from the conic setting to general
convex programs
of the form $f(x) + g(Ax)$---let us here consider the special case $f(x) + g(x)$. We identify a single affine subspace $T_a \subseteq
\mathbb{R}^n$ that simultaneously regularizes both functions and contains the feasible region.
The joint supporting subspace $T_a$ is defined for any two convex sets (see \Cref{loc:the_joint_supporting_subspace.statement}), which are usually the domains of two convex functions. 
We can then reduce the effective domains of each function to $T_a$: $f' := f + \indicator_{T_a}, g' := g + \indicator_{T_a}$. For these reduced functions, qualification conditions are then guaranteed to hold, while preserving the original problem in the sense that $f+g \equiv f' + g'$. 
With these reduced functions we modify the sum rule to resolve our previously encountered
counterexample, by using the reduced functions on the r.h.s: $\partial (f+g)(x) = \partial f'(x) + \partial g'(x)$. In our counter-example, the joint supporting subspace is $T_a = \{0\}$, and the modified
sum rule yields 
\begin{equation*}
\partial f'(0) + \partial g'(0) =
\partial \indicator_{\{0\}}(0) + \partial \indicator_{\{0\}}(0) = \mathbb{R},
\end{equation*}
which indeed matches $\partial(f+g)(0)$.
We note that the reduced functions $f'$, $g'$ are trivial in this example
only because the feasible region is itself trivial. In general, our reduction will preserve the functions' behaviour inside---and to some extent outside---the feasible region.

In this manner, in \Cref{loc:body.main_results} we generalize
a collection of results from convex analysis,
with the classical formulations trivially recovered when qualifications hold (see~\Cref{loc:recovery_of_the_classical_results.statement}).
Specifically, we find a modified Fenchel-Rockafellar dual for which strong duality always holds,
problem-independent Karush-Kuhn-Tucker (KKT)
optimality conditions, 
always-attained infimal convolution for the convex conjugates of sums of
pairs of functions,
qualification-free subdifferential sum and chain rules, 
and a characterization of the normal cones
of the intersection of two convex sets.

We provide a number of characterizations for $T_a$ in \Cref{loc:body.the_joint_supporting_subspace}. We specify the joint supporting subspace $T_a$ in terms of the difference of domains $\dom(f)-\dom(g)$, where the set difference is $C-D := \{c-d \mid c \in C,  d \in D\}$. Other characterizations relate $T_a$ to minimal faces of the domains containing the feasible region (\Cref{loc:characterization_through_generated_faces.statement}, \Cref{loc:joint_supporting_subspace_as_affine_hull.statement}, and \Cref{loc:joint_supporting_subspace_from_difference_of_faces.statement}). One of our characterizations (\Cref{loc:iterative_bilateral_facial_reduction.statement}) consists of an iterative reduction procedure which recovers $T_a$ and terminates in a number of steps no more than the ambient dimension. This reduction can be performed from any point in the feasible region, using so-called nested normal cones (see \Cref{loc:nested_normals.statement}). We show that the reduction at a point does not in fact depend on the choice of the point, which means that the reduction need only be performed once for a given optimization problem. It is known that facial reduction in the conic setting leads to a reduction of the size of the problem and improved numerical
stability~\cite{drusvyatskiyManyFacesDegeneracy2017}, which gives reason to believe that our generalization may have similar benefits.

A contribution of the present work
is in our proof technique, found in \Cref{loc:body.a_framework_for_local_face_lattices}. The
central objects are \emph{nested normals} (\Cref{loc:nested_normals.statement}) and \emph{supporting subspaces}
(\Cref{loc:supporting_subspace_at_s.statement}), which together describe the local lattice
of faces of a convex set. We establish
a structured induction via nested normals,
which traverses all supporting subspaces, 
with induction steps associated with nested normals (see~\Cref{loc:induction_by_nested_normal.statement}).
With nested normals and our structural induction, we characterize the minimal supporting subspace containing 
a set (see \Cref{loc:the_generated_supporting_subspace_is_the_only_supporting_subspace_with_no_nested_normals.statement}). We also characterize the facial structure of a difference of
convex sets $C-D$ at $0$ exclusively from the local facial structure of the individual convex sets $C$ and $D$ at $C \cap D$ (see \Cref{loc:characterization_of_the_faces_of_differences_of_convex_sets.statement}). These results enable us to prove our characterizations of the
joint supporting subspace that are presented in \Cref{loc:body.the_joint_supporting_subspace}.
\subsection{Paper outline}
\label{loc:introduction_for_sub:differentials_additivity.statement.paper_outline}
\Cref{loc:body.prior_works} surveys prior works, notably facial reduction in conic programming, limiting approaches to exact subdifferential calculus, and the lexicographic characterization of the faces of convex sets. In \Cref{loc:body.notation_and_preliminaries} we introduce some notation. In \Cref{loc:body.the_joint_supporting_subspace} we state analytic and constructive characterizations of the joint supporting subspace, with proofs deferred to \Cref{loc:body.proofs}. In \Cref{loc:qualification_conditions_always_hold_in_the_joint_supporting_subspace.statement} we state the main property of the joint supporting subspace: that qualification conditions hold when localizing to it.
In \Cref{loc:body.main_results}, we state and prove generalizations of central results in convex analysis. In \Cref{loc:body.a_framework_for_local_face_lattices}, we build the core theoretical framework. In \Cref{loc:body.proofs}, we prove the results of \Cref{loc:body.the_joint_supporting_subspace} with the tools of \Cref{loc:body.a_framework_for_local_face_lattices}.
\section{Prior works}
\label{loc:body.prior_works}
Constraint qualifications (CQs) in optimization started with Slater's condition~\cite{slaterLagrangeMultipliersRevisited1959} and much work has since been aimed at finding relaxations of these conditions~\cite{mangasarianFritzJohnNecessary1967, abadieNonlinearProgramming1967, robinsonFirstOrderConditions1976}. 
Notably, a qualification condition using relative interiors (the weak Slater) was introduced by~\textcite{rockafellarConvexAnalysis1970}, who used the same qualification conditions for optimization
problems and subdifferential calculus, which
enabled our unified treatment.

There are a number of qualification-free alternatives to classical results, which come at the cost of some added complexity. One approach is to consider approximations of the core objects, with qualification-free exact results obtained in the limit of increasingly accurate approximation. This type of result includes $\varepsilon-$subdifferentials~\cite{hiriarturruty_SubdifferentialCalculusUsing_1993}, and fuzzy sum rules~\cite{ioffeFuzzyPrinciplesCharacterization1998, jules_FormulasSubdifferentialsSums_2002}.
Similarly, sequential optimality conditions have been formulated~\cite{thibaultSequentialConvexSubdifferential1997, jeyakumarInequalitySystemsGlobal1996}. Sequential approaches are related to the broader theory of variational analysis.

Another kind of qualification-free result is obtained by considering the geometry of
domain boundaries, with the notable example of facial reduction in conic optimization. Our work is the first, to our knowledge, to extend this geometric methodology to qualification-free subdifferential calculus.

In this vein, a qualification-free necessary and sufficient condition was formulated
by \textcite{ben-talCharacterizationOptimalityConvex1976}, and extended to
the nonsmooth case by \textcite{wolkowiczGeometryOptimalityConditions1980}, which achieve qualification-free characterization of minimizers by adding a term to the KKT conditions. Our qualification-free KKT condition in \Cref{loc:optimality_condition_for_the_exact_fr_dual.statement}
is formulated for programs of the form $f(x) + g(Ax)$, instead of inequality-constrained convex programs,
and uses a reduction instead of
an additional term.

Our theory extends Facial Reduction (FR) from conic programs to programs of
the form $f(x) + g(Ax)$. The field of facial reduction started by generalizing preprocessing
ideas from linear programming
to the setting of conic programming~\cite{massamOptimalityConditionsConeConvex1979, borweinFacialReductionConeconvex1981, borweinRegularizingAbstractConvex1981},
with subsequent developments 
providing efficient computational methods.
One computational method consists in iteratively reducing the dimension of the
problem~\cite{borweinRegularizingAbstractConvex1981, luoDualityResultsConic1997, wakiFacialReductionAlgorithms2013}
until the minimal face is reached, an approach
that we generalize in \Cref{loc:iterative_bilateral_facial_reduction.statement}.
Ideas of facial reduction can alternatively be framed as
\emph{exact dual problems}~\cite{borweinCharacterizationOptimalityAbstract1981},
with important applications to semidefinite programming~\cite{ramanaExactDualityTheory1997, patakiStrongDualityConic2013}.

From convex geometry, a line of work that enabled the development of our theory was the \emph{lexicographical
characterization of faces}~\cite{martinez-legazLexicographicalSeparationRn1987, martinez-legaz_LEXICOGRAPHICALCHARACTERIZATIONFACES_}. 
It turned out that the
lexicographical framework was not flexible enough for our needs, prompting us to develop an analogous self-contained theory, recovering the lexicographical characterization of convex faces
in~\Cref{loc:completeness_of_the_composition_of_nested_normals.statement} in different terms.
Specifically, our framework based on nested normals allows us to specify
a special kind of structural induction on supporting subspaces (and thereby faces)
in \Cref{loc:induction_by_nested_normal.statement}.
This induction is powerful because each inductive step is associated
with a single nested normal. Additionally, the use of nested normals allows
us to specify a reduction procedure that uses nested normal cones at each
step, instead of individual nested normals (see~\Cref{loc:simplified_iterative_bilateral_facial_reduction.statement} and \Cref{loc:iterative_bilateral_facial_reduction.statement}). This formulation provides structural insight and allows, in most cases, termination in fewer steps by dropping many dimensions at once in some steps.

Shortly after the first version of our work was made public, an independent paper by \textcite{linFacialReductionNice2025} was made available which specifies a reduction for convex programs of the form $f(x) + g(x)$. Their work has an algorithmic focus: they show feasibility detection and fast reductions with extended duals, by defining a notion of niceness for the domains of the convex functions. In contrast, our work is more geometric, providing analytical characterizations of the joint supporting subspace in \Cref{loc:body.the_joint_supporting_subspace} and qualification-free results stated with this object in \Cref{loc:body.main_results}, including a reduction for programs in the slightly more general form $f(x) + g(Ax)$, and subdifferential calculus sum and chain rules. Additionally, the proof techniques of the two papers seem remarkably distinct: they lift to cones, whereas we introduce nested normals in the original space.
\section{Notation and Preliminaries}
\label{loc:body.notation_and_preliminaries}
For two sets $A, B \subseteq \mathbb{R}^n$, $A+B := \{a+b \mid a \in A, b \in B\}$ is the Minkowski sum, with $A-B := \{a-b \mid a \in A, b \in B\}$,
and for $z \in \mathbb{R}^n$, let $A+z := A + \{z\}$.
For a set $S \subseteq \mathbb{R}^n$, we denote
by $\indicator_S(x)$ the indicator function which is
$0$ if $x \in S$ and $\infty$ otherwise.

For a set $S \subseteq \mathbb{R}^n$, denote by $\conv(S)$ its convex hull (smallest containing convex set). For $y_1, y_2 \in \mathbb{R}^n$, let $[y_1, y_2] := \conv \{y_1, y_2\}$, and $]y_1, y_2[ := \conv \{y_1, y_2\} \setminus \{y_1, y_2\}$.
We also denote $\cone(S)$ for the conical hull of a set; the smallest cone
containing $S$.

Let $C \subseteq \mathbb{R}^n$ be convex. We denote by $\rint(C)$ the relative interior of $C$. A subset $F \subseteq C$ is defined to be a \emph{face} of $C$ when $\forall y, z\in C$, $(\exists x \in ]y, z[ \cap F \implies y, z \in F)$.
That is, a face $F \subseteq C$ is a subset of $C$ for which there is no line segment that ``goes through" $F$ while remaining in $C$. 
A face $F \subseteq C$ is \emph{exposed} when there is a $u \in \mathbb{R}^n \setminus \{0\}$ such that $F = \argmax_{c \in C} \langle c, u\rangle$.

For any set $S \subseteq C$, we denote by $F_C(S)$ the face of $C$ \emph{generated} by $S$;
it is the smallest face of $C$ that contains $S$. (It is also the intersection
of all faces containing $S$, which, it can be shown, is itself a face.)

For $x \in \mathbb{R}^n$ and a convex set $C \subseteq \mathbb{R}^n$,
the normal cone of $C$ at $x$ is $N_C(x) = \{y \mid \forall c \in C, \langle y, c - x \rangle \le 0\}$
if $x \in C$, and $N_C(x)= \emptyset$ otherwise.
We denote $\mathbb{R}_{+} := [0, \infty)$, $\mathbb{R}_{++}:= (0, \infty)$, and similarly for $\mathbb{R}_{-}$ and $\mathbb{R}_{--}$.
The extended reals are $\bar{\mathbb{R}} = \mathbb{R} \cup \{-\infty, \infty\}$. Given a convex function
$f:\mathbb{R}^n \to \bar{\mathbb{R}}$
we denote by $\dom(f)$ the effective domain of $f$; $\dom(f) := \{x \in
\mathbb{R}^n \mid f(x) \neq \infty\}$.
The function $f$ is \emph{proper} if it is nowhere $-\infty$ and has nonempty effective domain. 
The subdifferential is
\begin{equation*}
\partial f(x) := \{v \in \mathbb{R}^n \mid \forall y \in \mathbb{R}^n, f(x) + \langle v, y-x\rangle \le f(y)\},
\end{equation*}
with $\partial f(x) = \emptyset$ when $x \notin \dom(f)$.
The convex conjugate is $f^*(y):= \sup_{x \in \mathbb{R}^n}\langle y, x\rangle - f(x)$.
% [[@schneider_ConvexBodiesBrunn_2013]][Theorem 2.1.2]
% Can add that the sub-gradients completely determine the cone as well.
\section{The joint supporting subspace}
\label{loc:body.the_joint_supporting_subspace}
\begin{figure}[htbp]
    \centering
    \begin{subfigure}{0.32\textwidth}
        \centering
        \includegraphics[width=\textwidth]{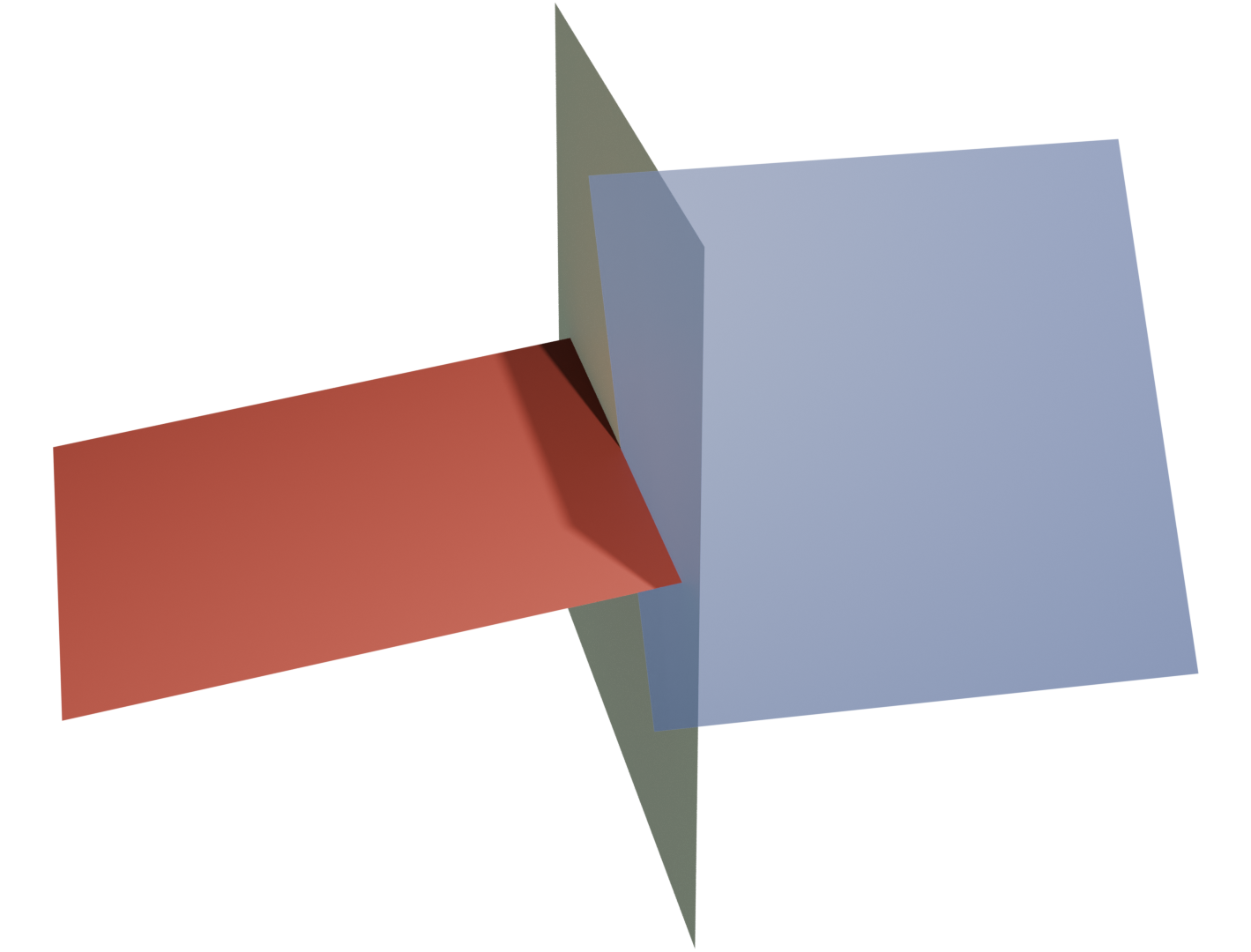}
    \end{subfigure}
    \hfill
    \begin{subfigure}{0.32\textwidth}
        \centering
        \tikzset{every picture/.style={line width=0.75pt}} %set default line width to 0.75pt        

        \begin{tikzpicture}[x=0.75pt,y=0.75pt,yscale=-1,xscale=1]
        %uncomment if require: \path (0,300); %set diagram left start at 0, and has height of 300

        %Rounded Single Corner Rect [id:dp5139709339399551] 
        \draw  [draw opacity=0][fill={rgb, 255:red, 144; green, 162; blue, 231 }  ,fill opacity=1 ] (250,100) .. controls (250,83.43) and (263.43,70) .. (280,70) -- (310,70) -- (310,150) -- (250,150) -- cycle ;
        %Shape: Triangle [id:dp4218599741300505] 
        \draw  [draw opacity=0][fill={rgb, 255:red, 231; green, 98; blue, 84 }  ,fill opacity=1 ][line width=0.75]  (150,60) -- (250,100) -- (150,100) -- cycle ;
        %Shape: Circle [id:dp26499699474550187] 
        \draw  [draw opacity=0][fill={rgb, 255:red, 149; green, 231; blue, 129 }  ,fill opacity=0.8 ] (246.63,100) .. controls (246.63,98.14) and (248.14,96.63) .. (250,96.63) .. controls (251.86,96.63) and (253.38,98.14) .. (253.38,100) .. controls (253.38,101.86) and (251.86,103.38) .. (250,103.38) .. controls (248.14,103.38) and (246.63,101.86) .. (246.63,100) -- cycle ;
        \end{tikzpicture}
    \end{subfigure}
    \hfill
    \begin{subfigure}{0.32\textwidth}
        \centering
        \tikzset{every picture/.style={line width=0.75pt}} %set default line width to 0.75pt        

        \begin{tikzpicture}[x=0.75pt,y=0.75pt,yscale=-1,xscale=1]
            %uncomment if require: \path (0,300); %set diagram left start at 0, and has height of 300

            %Shape: Circle [id:dp9540996400614068] 
            \draw  [draw opacity=0][fill={rgb, 255:red, 144; green, 162; blue, 231 }  ,fill opacity=1 ] (150,105) .. controls (150,80.15) and (170.15,60) .. (195,60) .. controls (219.85,60) and (240,80.15) .. (240,105) .. controls (240,129.85) and (219.85,150) .. (195,150) .. controls (170.15,150) and (150,129.85) .. (150,105) -- cycle ;
            %Straight Lines [id:da5912426049149471] 
            \draw [color={rgb, 255:red, 149; green, 231; blue, 129 }  ,draw opacity=0.4 ][line width=4.5]    (130,60) -- (250,60) ;
            %Straight Lines [id:da502906891955832] 
            \draw [color={rgb, 255:red, 231; green, 98; blue, 84 }  ,draw opacity=1 ][line width=1.5]    (160,60) -- (230,60) ;
        \end{tikzpicture}
        \end{subfigure}
    \caption{In each figure, the green subspace is the joint supporting subspace for the red and blue convex sets.}
    \label{fig:combined}
\end{figure}
In this section we investigate the
following object.
\begin{definition}[The joint supporting subspace]
\label{loc:the_joint_supporting_subspace.statement}
The \emph{joint supporting subspace} of two convex sets $C, D \subseteq \mathbb{R}^n$ is
\begin{equation*}
T(C, D) := \Span F_{C-D}(0).
\end{equation*}
We also define its affine counterpart: $T_a(C, D) := T(C,D) + C \cap D$.
\end{definition}
We note that when $C \cap D =  \emptyset$, we have $T_a = \emptyset$ also.
This is an intentional convention which takes care of the infeasible case
in \Cref{loc:body.main_results}.
The Minkowski sum $(+ C \cap D)$ in the definition of $T_a$ is in fact
an affine translation. This follows from the next corollary.
\begin{corollary}[Containment of the intersection]
\label{loc:containment_of_the_intersection_in_the_joint_supporting_subspace.statement}
Let $C, D \subseteq \mathbb{R}^n$ be convex. Then
\begin{equation*}
C \cap D - C \cap D \subseteq T(C, D).
\end{equation*}
\end{corollary}
All proofs for results presented in this section are deferred to \Cref{loc:body.proofs}, because they require
the machinery introduced in \Cref{loc:body.a_framework_for_local_face_lattices}. \Cref{loc:containment_of_the_intersection_in_the_joint_supporting_subspace.statement} follows from \Cref{loc:decomposition_of_supporting_subspaces_of_differences.statement}; for details, \hyperlink{loc:containment_of_the_intersection_in_the_joint_supporting_subspace.proof}{the proof} can be found at the end of \Cref{loc:narrative_for_conditional_subdifferentials.statement.nested_normals_of_a_difference_of_sets}.

From \Cref{loc:containment_of_the_intersection_in_the_joint_supporting_subspace.statement}, we see that picking any $x \in C \cap D$, we have $T_a = T + x$, independently of the choice of $x$. \Cref{loc:containment_of_the_intersection_in_the_joint_supporting_subspace.statement} also implies that
$C \cap D \subseteq T_a$, showing that $T_a$ is the unique translation
of $T$ that contains $C \cap D$.

In this section we provide two main characterizations of the joint supporting subspace.
The next result builds towards the first characterization, describing the
relation of the joint supporting subspace with the individual convex sets.
\begin{lemma}[The joint supporting subspace reveals faces]
\label{loc:the_joint_supporting_subspace_reveals_faces.statement}
For convex sets $C, D \subseteq \mathbb{R}^n$, and $T_a=T_a(C, D)$,
\begin{equation*}
T_a \cap C = F_C (C  \cap D).
\end{equation*}
\end{lemma}
Next we introduce the \emph{generated
supporting subspace}, which is central to our theory. This is made clear
in \Cref{loc:body.a_framework_for_local_face_lattices}, where we re-introduce
this object in motivated fashion in~\Cref{loc:generated_supporting_subspace.statement}.
\begin{definition}[Preliminary definition of the generated supporting subspace]
\label{loc:generated_supporting_subspace.basic_definition}
Given a convex set $C \subseteq \mathbb{R}^n$ with $S \subseteq C$ non-empty, define the \emph{supporting subspace} of $C$ \emph{generated} by $S \subseteq C$ to be
\begin{equation*}
H_C(S) := \Span(F_C(S) - F_C(S)).
\end{equation*}
\end{definition}
The next theorem is our first characterization of the joint supporting subspace.
\begin{theorem}[Characterization through generated faces]
\label{loc:characterization_through_generated_faces.statement}
The joint supporting subspace of $C, D \subseteq \mathbb{R}^n$ is
\begin{equation*}
T(C, D) = H_C(C \cap D) + H_D(C \cap D).
\end{equation*}
\end{theorem}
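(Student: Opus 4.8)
The plan is to prove the two inclusions separately. The inclusion $H_C(C\cap D)+H_D(C\cap D)\subseteq T(C,D)$ is a short consequence of the already-available \Cref{loc:the_joint_facial_subspace_reveals_faces.statement} and \Cref{loc:containment_of_the_intersection_in_the_joint_affine_facial_subspace.statement}, whereas the reverse inclusion needs a direct argument with the minimal face $F_{C-D}(0)$. Throughout, $C\cap D\neq\emptyset$ is assumed, as is implicit in the statement (otherwise $F_{C-D}(0)$ and $H_C(C\cap D)$ are not defined).

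For the inclusion into $T(C,D)$, first record that $T_a-T_a=T(C,D)$: this is immediate from $T_a=T(C,D)+C\cap D$, the containment $C\cap D-C\cap D\subseteq T(C,D)$ of \Cref{loc:containment_of_the_intersection_in_the_joint_affine_facial_subspace.statement}, and $T(C,D)$ being a subspace. By \Cref{loc:the_joint_facial_subspace_reveals_faces.statement}, $F_C(C\cap D)=T_a\cap C\subseteq T_a$, hence $F_C(C\cap D)-F_C(C\cap D)\subseteq T_a-T_a=T(C,D)$, and taking the span yields $H_C(C\cap D)\subseteq T(C,D)$. Since $D-C=-(C-D)$, the joint facial subspace is symmetric in its two arguments and $T_a(C,D)=T_a(D,C)$, so applying \Cref{loc:the_joint_facial_subspace_reveals_faces.statement} to the pair $(D,C)$ gives in the same way $H_D(C\cap D)\subseteq T(C,D)$. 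As $T(C,D)$ is a subspace, $H_C(C\cap D)+H_D(C\cap D)\subseteq T(C,D)$.

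For the reverse inclusion, since $0\in F_{C-D}(0)$ we have $T(C,D)=\Span F_{C-D}(0)$, so it suffices to prove $F_{C-D}(0)\subseteq H_C(C\cap D)+H_D(C\cap D)$, the latter being a subspace. Fix $v\in F_{C-D}(0)$, which we may assume nonzero. Using the elementary description of the face generated by a point---for convex $K$ and $p\in K$, the minimal face of $K$ containing $p$ is $\{w\in K:\ (1+\epsilon)p-\epsilon w\in K\ \text{for some}\ \epsilon>0\}$---applied at $p=0$, there is $\epsilon>0$ with $-\epsilon v\in C-D$. Write $v=c_1-d_1$ and $-\epsilon v=c_2-d_2$ with $c_1,c_2\in C$ and $d_1,d_2\in D$. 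Adding $\epsilon v$ to the identity $-\epsilon v=c_2-d_2$ gives $\epsilon c_1+c_2=\epsilon d_1+d_2$, so
\begin{equation*}
m:=\frac{\epsilon c_1+c_2}{1+\epsilon}=\frac{\epsilon d_1+d_2}{1+\epsilon}
\end{equation*}
is a convex combination of $c_1,c_2$ with both weights positive, and likewise of $d_1,d_2$, hence $m\in C\cap D$. Since $m\in C\cap D\subseteq F_C(C\cap D)$, and $m$ lies in $]c_1,c_2[$ when $c_1\neq c_2$ (while trivially $c_1=c_2=m\in F_C(C\cap D)$ otherwise), the defining property of a face (\Cref{loc:faces_of_convex_sets.statement}) forces $c_1,c_2\in F_C(C\cap D)$; symmetrically $d_1,d_2\in F_D(C\cap D)$. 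Finally, from $c_2-d_2=-\epsilon v$ one gets $(c_1-c_2)-(d_1-d_2)=(1+\epsilon)v$, so
\begin{equation*}
v=\frac{1}{1+\epsilon}\big((c_1-c_2)-(d_1-d_2)\big)\in H_C(C\cap D)+H_D(C\cap D),
\end{equation*}
because $c_1-c_2\in F_C(C\cap D)-F_C(C\cap D)$, $d_1-d_2\in F_D(C\cap D)-F_D(C\cap D)$, and $H_C(C\cap D)$, $H_D(C\cap D)$ are subspaces closed under subtraction and scaling.

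The two inclusions give the claimed identity. The reverse inclusion is routine once \Cref{loc:the_joint_facial_subspace_reveals_faces.statement} is granted, so the only real content is the computation above, and the one step I expect to need care is the elementary characterization of the face generated by a point (the ``reflection'' $v\in F_{C-D}(0)\Rightarrow-\epsilon v\in C-D$): either it is isolated as a small preliminary lemma, or it is recovered from the nested-normal and facial-subspace machinery of \Cref{loc:body.nested_normals_and_facial_subspaces}. After that is in hand, the averaging identity producing $m\in C\cap D$ and the face-absorption step are short, with only the harmless degenerate cases $v=0$ and $c_1=c_2$ (equivalently $d_1=d_2$) to note in passing.
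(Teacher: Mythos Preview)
Your proof is correct, and for the harder inclusion $T(C,D)\subseteq H_C(C\cap D)+H_D(C\cap D)$ it is genuinely different from, and substantially more elementary than, the paper's argument. The paper argues by contradiction: assuming a proper containment, it finds a unit vector $v\in T$ orthogonal to $H_C(C\cap D)+H_D(C\cap D)$, invokes \Cref{loc:normal_to_the_generated_facial_subspace_of_one_convex_set.statement} (itself relying on \Cref{loc:rint_qualification_holds_in_joint_facial_subspace.statement}, \Cref{loc:nested_constraint_subspaces_for_nested_normals.statement}, \Cref{loc:matching_sets_means_paired_normals.statement}, and \Cref{loc:the_generated_facial_subspace_is_the_only_facial_subspace_with_no_nested_normals.statement}) to place $\pm v$ in $N_C^T(C\cap D)$ and $N_D^T(C\cap D)$, and then uses \Cref{loc:characterization_of_the_conditional_normals_of_a_difference_of_sets.statement} to produce a nonzero nested normal of $C-D$ in $T$, contradicting $T=H_{C-D}(0)$. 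Your averaging construction $m=\tfrac{\epsilon c_1+c_2}{1+\epsilon}=\tfrac{\epsilon d_1+d_2}{1+\epsilon}\in C\cap D$, followed by face absorption, bypasses all of this machinery and uses only \Cref{loc:faces_of_convex_sets.statement} and the standard fact that $0\in\rint F_{C-D}(0)$. For the easier inclusion both proofs rest on \Cref{loc:the_joint_facial_subspace_reveals_faces.statement} (so \Cref{loc:normal_to_the_generated_facial_subspace_of_one_convex_set.statement} is still needed upstream), and the paper's route has the advantage of exercising and validating the nested-normal framework it is building; your route has the advantage of being direct, constructive, and independent of \Cref{loc:rint_qualification_holds_in_joint_facial_subspace.statement}.

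One small slip in your closing paragraph: you write that ``the reverse inclusion is routine once \Cref{loc:the_joint_facial_subspace_reveals_faces.statement} is granted,'' but it is the \emph{forward} inclusion $H_C(C\cap D)+H_D(C\cap D)\subseteq T(C,D)$ that uses \Cref{loc:the_joint_facial_subspace_reveals_faces.statement}; the reverse inclusion is precisely your elementary computation.
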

\begin{corollary}[Joint supporting subspace as hull of faces]
\label{loc:joint_supporting_subspace_as_affine_hull.statement}
For convex sets $C, D \subseteq \mathbb{R}^n$,
\begin{equation*}
T_a(C, D) = \aff(F_C(C \cap D) \cup F_D(C \cap D)).
\end{equation*}
\end{corollary}
\begin{corollary}[Joint supporting subspace as difference of faces]
\label{loc:joint_supporting_subspace_from_difference_of_faces.statement}
For convex sets $C, D \subseteq \mathbb{R}^n$,
\begin{equation*}
T(C, D) = \Span(F_C(C \cap D)-F_D(C \cap D)).
\end{equation*}
\end{corollary}
So far, we have characterized the joint supporting subspace with various
analytical formulas. Our next characterization instead specifies how
to obtain the joint supporting subspace constructively, from an iterative
process with a number of steps bounded
by the ambient dimension.
This result, \Cref{loc:iterative_bilateral_facial_reduction.statement}, is best stated in terms of \emph{nested normals},
a concept that we introduce in \Cref{loc:nested_normals.statement},
and motivate throughout \Cref{loc:body.a_framework_for_local_face_lattices}.
For this reason, we defer it to \Cref{loc:body.proofs.second_characterization}. For the moment, we state a direct corollary which does not require this object.
\begin{corollary}[Iterative bilateral facial reduction]
\label{loc:simplified_iterative_bilateral_facial_reduction.statement}
Let $C, D \subseteq \mathbb{R}^n$ be convex sets and fix any
$x \in C \cap D$. Beginning with $T_0  = \mathbb{R}^n$, let
\begin{equation*}
T_{i+1} = T_i \cap (N_{C \cap (T_i + x)}(x) \cap [-N_{D  \cap (T_i + x)}(x)] )^\perp.
\end{equation*}
Let $\ell$ be the smallest number such that 
$T_{\ell+1} = T_\ell$.  Then:
\begin{enumerate}
\item $T_\ell =  T(C, D)$.
\item $\ell \leq n$.
\end{enumerate}
\end{corollary}
We defer \hyperlink{loc:simplified_iterative_bilateral_facial_reduction.proof}{the proof} to \Cref{loc:body.proofs.second_characterization}. 
Now that the joint supporting subspace has been thoroughly identified,
we move on to describing its core property.
Recall that for two convex functions $f, g:\mathbb{R}^n \to \bar{\mathbb{R}}$,
a common qualification condition is that for convex sets $C:= \dom(f), D:=\dom(g)$,
\begin{equation}
\label{eq:ri}
\rint(C) \cap \rint(D) \neq \emptyset.
\end{equation}
It was shown in \cite[Theorem 11.3]{rockafellarConvexAnalysis1970}
that this notion is equivalent to a \emph{lack of proper separation}
of the domains. We define the notions of \emph{separation}, and \emph{proper separation} below.
We define these notions in affine subspaces, an intuitive generalization
of separation in Euclidean spaces.
\begin{definition}[Separation of sets]
\label{loc:separation_of_sets.statement}
Given an affine subspace $V \subseteq \mathbb{R}^n$, two convex sets
$C, D \subseteq V$ are said to be \emph{separated} in $V$
when there exists some $u \in (V-V) \setminus \{0\}$ such that 
\begin{equation}
\label{eq:sep}
\forall c \in C,  d \in D, \langle u, c\rangle \le \langle u, d\rangle.
\end{equation}
\end{definition}
The concept of \emph{proper separation} is slightly stronger.
\begin{definition}[Proper separation of sets]
\label{loc:proper_separation_of_sets.statement}
For an affine subspace $V \subseteq \mathbb{R}^n$, convex sets $C,D \subseteq \mathbb{R}^n$
are \emph{properly separated} in $V$ when
there exists some $u \in (V-V) \setminus \{0\}$ such that \Cref{eq:sep} holds, and 
additionally such that there exists some pair $(c, d) \in C \times D$ for 
which the inequality in \Cref{eq:sep} is
strict.
\end{definition}
Proper separation excludes the case where the two sets are
merely separated by virtue of being both contained in a single hyperplane.
Since separation follows from proper separation, and lack of proper separation
is equivalent to intersection of relative interiors (by \cite[Theorem 11.3]{rockafellarConvexAnalysis1970}), the contrapositive
statement is that
\begin{equation}
\label{eq:relation_sep}
\text{lack of separation} \implies \text{intersection of relative interiors}.
\end{equation}

In the joint supporting subspace there is always a ``lack of separation"
of the convex sets.
\begin{theorem}[Qualification conditions always hold in the joint supporting subspace]
\label{loc:qualification_conditions_always_hold_in_the_joint_supporting_subspace.statement}
For two convex sets $C, D \subseteq \mathbb{R}^n$ with $T_a := T_a(C, D)$,
$C \cap T_a$ and $D \cap T_a$ are not separated
in $T_a$.
\end{theorem}
\hyperlink{loc:qualification_conditions_always_hold_in_the_joint_supporting_subspace.proof}{The proof} of \Cref{loc:qualification_conditions_always_hold_in_the_joint_supporting_subspace.statement} is the subject of \Cref{loc:body.proofs.regularity_in_the_joint_supporting_subspace}.
\begin{corollary}[rint qualification holds in joint supporting subspace]
\label{loc:rint_qualification_holds_in_joint_supporting_subspace.statement}
For any convex sets $C, D \subseteq \mathbb{R}^n$ such that $C \cap D \neq \emptyset$,
\begin{equation*}
\rint (C \cap T_a)  \cap \rint (D \cap T_a) \neq \emptyset.
\end{equation*}
\end{corollary}
\begin{proof}[\hypertarget{loc:rint_qualification_holds_in_joint_supporting_subspace.proof}Proof of \Cref{loc:rint_qualification_holds_in_joint_supporting_subspace.statement}]

The result follows from \Cref{loc:qualification_conditions_always_hold_in_the_joint_supporting_subspace.statement}
by \Cref{eq:relation_sep}, with the caveat that we defined separation in affine subspaces,
which are not necessarily vector spaces,
while in \cite[theorem 11.3]{rockafellarConvexAnalysis1970} this notion
is defined in $\mathbb{R}^n$. Yet the argument still holds because
the notions of separation, proper separation, and relative interiors
are invariant under translation of $T_a$ to $T$.
For any point $x_0 \in T_a$, $C \cap T_a$ is (properly) separated from 
$C \cap T_a$ iff $C \cap T_a - x_0$ is (properly) separated
from $D \cap T_a - x_0$ in $T$. A similar equivalence holds for intersection
of relative interiors.
\end{proof}
\section{Main results}
\label{loc:body.main_results}
With \Cref{loc:rint_qualification_holds_in_joint_supporting_subspace.statement}
we provide qualification-free generalizations 
by localizing to $T_a$.
The next result shows that when standard qualification conditions hold,
the classical formulations are trivially recovered.
\begin{proposition}[Recovery of the classical results]
\label{loc:recovery_of_the_classical_results.statement}
If the relative interiors
of two convex sets $C, D \subseteq \mathbb{R}^n$
intersect, then
\begin{equation*}
C \cup D \subseteq T_a(C, D).
\end{equation*}
\end{proposition}
\subsection{Subdifferential calculus}
\label{loc:body.main_results.subdifferential_calculus}
\begin{corollary}[Subdifferential sum rule]
\label{loc:qualification:free_subdifferential_additivity.statement}
For $f,g:\mathbb{R}^n\to\bar{\mathbb{R}}$ proper convex, $x  \in \mathbb{R}^n$,
and $T_a = T_a(\dom(f), \dom(g))$,
\begin{equation*}
\partial(f+g)(x) = \partial (f + \indicator_{T_a}) (x) + \partial (g + \indicator_{T_a}) (x).
\end{equation*}
\end{corollary}
We note that in the infeasible case, when $f + g \equiv \infty$, $T_a$ is $\emptyset$, and the result holds trivially. A similar pattern occurs for many results in this section. 
\begin{proof}[\hypertarget{loc:qualification:free_subdifferential_additivity.proof}Proof of \Cref{loc:qualification:free_subdifferential_additivity.statement}]

By
\Cref{loc:rint_qualification_holds_in_joint_supporting_subspace.statement}
applied to $\dom(f)$ and $\dom(g)$,
the result follows by application of
\cite[Theorem 23.8]{rockafellarConvexAnalysis1970} to the functions $f + \indicator_{T_a}$ and $g + \indicator_{T_a}$.
\end{proof}
\begin{corollary}[Normal cone of the intersection of convex sets]
\label{loc:normal_cone_of_the_intersection_of_convex_sets.statement}
For $C, D \subseteq \mathbb{R}^n$ convex, and $x \in C \cap D$, for $T_a=T_a(C,D)$,
\begin{equation*}
N_{C \cap D}(x) = N_{C \cap T_a}(x) + N_{D \cap T_a}(x).
\end{equation*}
\end{corollary}
\begin{proof}[\hypertarget{loc:normal_cone_of_the_intersection_of_convex_sets.proof}Proof of \Cref{loc:normal_cone_of_the_intersection_of_convex_sets.statement}]

With \Cref{loc:rint_qualification_holds_in_joint_supporting_subspace.statement} we can use
\cite[Corollary 23.8.1]{rockafellarConvexAnalysis1970},
from which we find that
\begin{equation*}
N_{C \cap D}(x) = N_{(C \cap T_a) \cap (D \cap T_a)}(x)= N_{C \cap T_a}(x) + N_{D \cap T_a}(x).
\end{equation*}
\end{proof}
%remark::![[Qualification-free subdifferential additivity#Remark: an alternative, stronger process]]
In results that follow, we abuse notation by associating
a matrix $A \in \mathbb{R}^{m \times n}$ with the corresponding linear
map $A:\mathbb{R}^n \to \mathbb{R}^m$. For a set $S \subseteq \mathbb{R}^n$,
we denote by $AS$ the set image $A(S)$, and 
by $A^{-1}(\cdot)$ the preimage of the linear map. 
We make no assumption on the
invertibility of the matrix $A$.
\begin{corollary}[Subdifferential chain rule]
\label{loc:generalized_subdifferential_chain_rule.statement}
Let $g:\mathbb{R}^m  \to \bar{\mathbb{R}}$ be a proper convex function,
and $A \in \mathbb{R}^{m  \times n}$ a matrix.
Then for $T := T(\dom(g), \range(A))$,
\begin{equation*}
\partial(g \circ A)(x) = A^T\partial(g + \indicator_{T})(Ax).
\end{equation*}
\end{corollary}
\begin{proof}[\hypertarget{loc:generalized_subdifferential_chain_rule.proof}Proof of \Cref{loc:generalized_subdifferential_chain_rule.statement}]

Denote $T_a = T_a(\dom(g),\range(A))$.
Note that $\forall x \in \mathbb{R}^n$,
\begin{equation*}
(g\circ A)(x)= ([g + \indicator_{T_a}] \circ A)(x),
\end{equation*}
because $T_a \supseteq \dom(g) \cap \range(A)$. So it suffices to calculate the subdifferential of the r.h.s. of the above equation.

If $\dom(g)  \cap \range(A) = \emptyset,$
then the result holds trivially because $g(Ax) \equiv \infty$, and $(g + \indicator_{T_a})(Ax) \equiv \infty$ as well.
Otherwise, by \Cref{loc:rint_qualification_holds_in_joint_supporting_subspace.statement}, 
$\rint \dom(g + \indicator_{T_a})  \cap \range(A)  \neq \emptyset$,
which lets us apply \cite[Theorem 23.9]{rockafellarConvexAnalysis1970}
to the composition $[g + \indicator_{T_a}] \circ A$.
Additionally, note that the only face of $\range(A)$ is itself, whence by
\Cref{loc:the_joint_supporting_subspace_reveals_faces.statement}
$\range(A) \subseteq T_a$. Since $0 \in \range(A)$,
this in fact implies that $T_a = T$, and the result follows.
\end{proof}
\subsection{Infimal convolution}
\label{loc:body.main_results.infimal_convolution}
Recall that for two convex functions $f,g:\mathbb{R}^n \to \bar{\mathbb{R}}$, the infimal convolution is
\begin{equation*}
(f\,  \square g)(z) := \inf_{x, y : x + y = z} f(x) + g(y).
\end{equation*}
In the following results, we denote by $f^*$ the convex conjugate of $f$.
\begin{corollary}[Attained infimal convolution]
\label{loc:attained_infimal_convolution_in_the_conjugate_of_a_sum.statement}
For two proper convex functions $f, g:\mathbb{R}^n \to \bar{\mathbb{R}}$,
and $T_a := T_a(\dom(f), \dom(g))$,
\begin{equation*}
(f+g)^* = (f + \indicator_{T_a})^* \square (g + \indicator_{T_a})^*,
\end{equation*}
where the infimum in the infimal convolution is attained.
\end{corollary}
\begin{proof}[\hypertarget{loc:attained_infimal_convolution_in_the_conjugate_of_a_sum.proof}Proof of \Cref{loc:attained_infimal_convolution_in_the_conjugate_of_a_sum.statement}]

Whenever $\dom(f) \cap \dom(g) =  \emptyset$, we have that $T_a = \emptyset$, in which case the equality
holds trivially, with the infimum attained at any $y \in \mathbb{R}^n$. 
Otherwise, by \Cref{loc:rint_qualification_holds_in_joint_supporting_subspace.statement}
the result follows from \cite[Theorem 20.1]{rockafellarConvexAnalysis1970}.
\end{proof}
\subsection{Convex optimization}
\label{loc:body.main_results.convex_optimization}
We next examine the following general convex program.
\begin{problem}[Convex program]
\label{loc:optimality_condition_for_the_exact_fr_dual.program}
Given lower semicontinuous proper convex functions $f:\mathbb{R}^n\to \bar{\mathbb{R}}$, $g:\mathbb{R}^m \to \bar{\mathbb{R}}$, and a matrix $A  \in \mathbb{R}^{m \times n}$,
\begin{equation*}
\minimize_{x \in \mathbb{R}^n} f(x)+ g(Ax).
\end{equation*}
\end{problem}
A standard regularity condition for this problem is that
\begin{equation*}
A(\rint \dom(f)) \cap \rint \dom(g) \neq \emptyset,
\end{equation*}
which is a weakening of Slater's condition.
A program with this property is called \emph{strongly consistent}~\cite[Theorem 31.2]{rockafellarConvexAnalysis1970}.

The next result shows the sense in which \Cref{loc:optimality_condition_for_the_exact_fr_dual.program}
can be localized to the joint facial subspace $T_a$, guaranteeing strong consistency whenever the program is feasible,
while retaining all the crucial information of the original program.
\begin{theorem}[Strong consistency of the localized program]
\label{loc:strong_consistency_of_the_localized_program.statement}
For $f:\mathbb{R}^n\to \bar{\mathbb{R}}$ and $g:\mathbb{R}^m \to \bar{\mathbb{R}}$ lower
semicontinuous proper convex functions,
a matrix $A  \in \mathbb{R}^{m  \times n}$, $T_a := T_a(A \dom(f),  \dom(g))$,
and any $x \in \mathbb{R}^n$,
\begin{equation}
\label{eq:myfn:equiv}
f(x)+ g(Ax) = (f(x) + \indicator_{A^{-1}(T_a)}(x)) + (g(x) + \indicator_{T_a}(x)).
\end{equation}
And whenever $\dom(f + g \circ A) \neq \emptyset$, 
\begin{equation*}
A\rint\dom(f + \indicator_{A^{-1}(T_a)}) \cap \rint\dom(g + \indicator_{T_a}) \neq \emptyset.
\end{equation*}
\end{theorem}
\begin{proof}[\hypertarget{loc:strong_consistency_of_the_localized_program.proof}Proof of \Cref{loc:strong_consistency_of_the_localized_program.statement}]

By \Cref{loc:containment_of_the_intersection_in_the_joint_supporting_subspace.statement},
$A \dom(f) \cap \dom(g) \subseteq T_a$,
and therefore $\indicator_{T_a}(Ax) = 0$ for all $x \in \dom(f) \cap A^{-1} \dom(g)$.
Additionally,
\begin{equation*}
\dom(f) \cap A^{-1} \dom(g) \subseteq A^{-1}(A\dom(f) \cap \dom(g)) \subseteq A^{-1} T_a,
\end{equation*}
which means that $\indicator_{A^{-1} T_a}(x)$ is $0$ on $\dom(f + g \circ A)$.
And for any $x \notin \dom(f+ g \circ A)$, it is easily checked that
both sides of \Cref{eq:myfn:equiv} evaluate to $\infty$, so \Cref{eq:myfn:equiv} holds for
all $x \in \mathbb{R}^n$.

For the second part of the result,
\begin{align}
&A\rint\dom(f + \indicator_{A^{-1}(T_a)}) \cap \rint(\dom(g + \indicator_{T_a})) \label{eq:ali:1}\\
 = &\rint(A\dom(f + \indicator_{A^{-1}(T_a)})) \cap \rint(\dom(g + \indicator_{T_a})) \label{eq:ali:2}\\
 = &\rint(A(\dom(f) \cap A^{-1}(T_a))) \cap \rint(\dom(g) \cap T_a) \label{eq:ali:3}\\
 = &\rint(A\dom(f) \cap T_a) \cap \rint(\dom(g) \cap T_a) \neq \emptyset. \label{eq:ali:4}
\end{align}
where \Cref{eq:ali:2} holds by \cite[Theorem 6.6]{rockafellarConvexAnalysis1970};
\Cref{eq:ali:4} uses the identity that $A(S_1 \cap  A^{-1}(S_2)) =  A S_1 \cap S_2$
for any sets $S_1 \subseteq \mathbb{R}^n$, $S_2 \subseteq \mathbb{R}^m$;
and that the set in \Cref{eq:ali:4} is non-empty is guaranteed by 
\Cref{loc:rint_qualification_holds_in_joint_supporting_subspace.statement}.
\end{proof}
We present an exact Fenchel--Rockafellar dual for \Cref{loc:optimality_condition_for_the_exact_fr_dual.program}:
a dual problem for which strong duality always holds and the dual attained.
\begin{corollary}[Exact Fenchel-Rockafellar dual]
\label{loc:exact_fenchel:rockafellar_dual.statement}
For $f:\mathbb{R}^n\to \bar{\mathbb{R}}$ and $g:\mathbb{R}^m \to \bar{\mathbb{R}}$ l.s.c. proper convex functions, and a matrix $A  \in \mathbb{R}^{m \times n}$,
let $T_a := T_a(A \dom(f), \dom(g))$. Then
\begin{equation*}
\inf_{x \in \mathbb{R}^n} [f(x)+ g(Ax)]=-\min_{y \in \mathbb{R}^m} [(f + \indicator_{A^{-1}(T_a)})^* (A^T y) + (g + \indicator_{T_a})^*(-y)].
\end{equation*}
%where $A^{-1}(U) = T(C, A^{-1}(D))$.
\end{corollary}
\begin{proof}[\hypertarget{loc:exact_fenchel:rockafellar_dual.proof}Proof of \Cref{loc:exact_fenchel:rockafellar_dual.statement}]

The result follows from
\Cref{loc:strong_consistency_of_the_localized_program.statement}
with the use of \cite[Corollary 31.2.1]{rockafellarConvexAnalysis1970}.
\end{proof}
\begin{definition}[Minimizer]
\label{loc:minimizer.statement}
A \emph{minimizer} of a given convex program is any point $x \in \mathbb{R}^n$ that attains the infimum of the program, provided that the infimum is finite. 
% If the program has an infinite infimum, even points that attain the infimum are not minimizers.
\end{definition}
Next, we provide a characterization of the minimizers of 
\Cref{loc:optimality_condition_for_the_exact_fr_dual.program}, i.e.,
KKT optimality conditions. Our KKT condition 
holds without any
further assumptions on the program; we do not require strong consistency.
\begin{corollary}[KKT conditions]
\label{loc:optimality_condition_for_the_exact_fr_dual.statement}
Consider \Cref{loc:optimality_condition_for_the_exact_fr_dual.program}, and let $T_a := T_a(A \dom(f),  \dom(g))$.
A point $x \in \mathbb{R}^n$ is a minimizer of \Cref{loc:optimality_condition_for_the_exact_fr_dual.program} if, and only if,
there exists some $y \in \mathbb{R}^m$ such that
\begin{enumerate}
\item $Ax \in \partial(g + \indicator_{T_a})^*(y).$
\item $A^Ty \in \partial(f + \indicator_{A^{-1}(T_a)})(x)$.
\end{enumerate}
\end{corollary}
\begin{proof}[\hypertarget{loc:optimality_condition_for_the_exact_fr_dual.proof}Proof of \Cref{loc:optimality_condition_for_the_exact_fr_dual.statement}]

From 
\Cref{loc:strong_consistency_of_the_localized_program.statement},
the result follows from application of \cite[Corollary 31.3.1]{rockafellarConvexAnalysis1970}
to the functions $f + \indicator_{A^{-1}(T_a)}$ and $g +
\indicator_{T_a}$.
\end{proof}
\section{A framework for local face lattices}
\label{loc:body.a_framework_for_local_face_lattices}
Inspired by the lexicographic characterization of convex faces
of~\cite{martinez-legaz_LEXICOGRAPHICALCHARACTERIZATIONFACES_}, we
develop a framework describing the lattice of convex
faces at a point $x \in C$, which we will then use to show
the results of \Cref{loc:body.the_joint_supporting_subspace}.
We begin with a characterization of
faces of convex sets. In the next few results, we denote $X$ for
a real vector space.
\begin{proposition}[Characterization of convex faces by supporting subspaces]
\label{loc:characterization_of_convex_faces_by_supporting_subspaces.statement}
Let $C \subseteq X$ be a convex set.
A subset $F \subseteq C$ is a face of $C$ if, and only if
the two following conditions hold.
\begin{enumerate}
\item  $\aff(F) \cap C=F$,
\item $C \setminus F$ is convex.
\end{enumerate}
\end{proposition}
\begin{proof}[\hypertarget{loc:characterization_of_convex_faces_by_supporting_subspaces.proof}Proof of \Cref{loc:characterization_of_convex_faces_by_supporting_subspaces.statement}]

% Add citations for standard results? They are the long ones, also.
Let $F$ be a face. We argue by contradiction that $\aff(F)  \cap [C \setminus F] = \emptyset$;
let us assume that there is some
$z \in \aff(F)  \cap [C\setminus F]$.

If $F$ is a singleton, then $\aff(F) = F$, and $\aff(F) \cap [C \setminus F]= \emptyset$. 
Otherwise, we may assume that $0 \in F \setminus \{z\}$ without
loss of generality, because the faces of a convex set are invariant under translation. With this assumption,
$\aff(F)= \Span(F) = \cone(F-F)$. Then $z \in \cone(F-F)$, 
so there are $y_1, y_2 \in F$ such that $z = \alpha(y_1- y_2)$ for some $\alpha \geq 0$.
We can rule out the possibility that $z \in [y_1, y_2]$,
because by convexity of $F$ it would follow that $z \in F$.
The remaining possibility is that $y_1 \in [y_2, z[$.
Further, note that $y_1 \neq y_2$, because otherwise $z = 0$,
which disagrees with our assumption that $z \in F\setminus \{0\}$.
Therefore, $y_1 \in ]y_2, z[$. The line segment $[y_2, z]$ has a point $y_1 \in ]y_2, z[ \cap F$. Since $y_2, z \in C$,
the fact that $F$ is a face implies that
$z$ must then be in $F$, in contradiction
with our definition of $z$. We have therefore shown the claim.

We next argue that $C \setminus F$ is convex. $\forall y, z \in C\setminus F$, any $x \in ]y, z[$ is in $C$ by convexity of $C$. Also, $x$ is not in $F$ because $F$ is a face. So any $x \in ]y, z[$ is also in
$C\setminus F$, which means that $C\setminus F$ is convex.

We now show the converse: let $F \subseteq C$ be such that $C \setminus F$ is convex and $\aff(F) \cap C = F$, we show that $F$ is a face of $C$.

Take any $y, z \in C$, $x \in F \cap ]y, z[$. If $y \in \aff(F)$ then $z \in \aff(F)$ as well, because $z = \alpha(x-y) + y$ for some $\alpha>0$. So either $z, y$ are both in $\aff(F)$, or neither are. But
if neither are, then $z, y \in C\setminus \aff(F)$, and by convexity of $C \setminus \aff(F)$,
$x \in C\setminus \aff(F)$ also, which is a contradiction with $x \in F$. Therefore, $z, y$ are both in $F$, 
which shows that $F$ is a face.
\end{proof}
\Cref{loc:characterization_of_convex_faces_by_supporting_subspaces.statement}
motivates the next mathematical object.
\begin{definition}[Supporting subspace]
\label{loc:supporting_subspace.statement}
Let $C \subseteq X$ be convex. A supporting subspace $L \subseteq X$ of $C$ is an affine subspace such that $C \setminus L$ is convex.
\end{definition}
Though supporting subspaces are affine subspaces, we omit the
adjective ``affine" for brevity.
This notion appears in \cite[Exercise 5.4]{brondstedIntroductionConvexPolytopes1983},
and is related to order ideals, a concept defined for convex cones. 
Supporting subspaces are of central importance to this paper; it allows us to shift our focus away from the faces themselves to the
comparatively simpler affine subspaces which select them through
intersections with the convex set. The next result describes the relationship between supporting subspaces and faces.
\begin{proposition}[Relation of supporting subspaces to faces]
\label{loc:relation_of_supporting_subspaces_to_faces.statement}
Let $C \subseteq X$ be convex.
\begin{enumerate}
\item For any affine subspace $L \subseteq X$,
\begin{equation*}
L \text{ is a supporting subspace }  \iff L \cap C \text{ is a face.}
\end{equation*}
\item For any subset $F \subseteq C$,
\begin{equation*}
F \text{ is a face of }C  \iff \exists L \text{ a supporting subspace s.t.} F = L \cap C.
\end{equation*}
\end{enumerate}
\end{proposition}
\begin{proof}[\hypertarget{loc:relation_of_supporting_subspaces_to_faces.proof}Proof of \Cref{loc:relation_of_supporting_subspaces_to_faces.statement}]
\noindent\textbf{1) ($\implies$)}
We show that if $L$ is a supporting subspace, then $F := C \cap L$ satisfies both conditions of \Cref{loc:characterization_of_convex_faces_by_supporting_subspaces.statement}:
\begin{enumerate}
\item That $\aff(F) \cap C = F$: note that $\aff(F) \subseteq L$ because $F \subseteq L$, and so
$\aff(F) \cap C \subseteq L \cap C =: F$.
On the other hand, $L \cap C =  (L \cap C) \cap C \subseteq \aff(L \cap C) \cap C$, and so 
$F \subseteq \aff(F) \cap C$.
\item That $C \setminus F$ is convex: note that 
\begin{equation*}
C \setminus F = C \setminus [C  \cap L] = C \setminus L,
\end{equation*}
which is convex by definition of $L$ as a supporting subspace.
\end{enumerate}

\noindent\textbf{1) ($\impliedby$)}
Since $L \cap C$ is a face, $C \setminus (L \cap C)$ is convex by
\Cref{loc:characterization_of_convex_faces_by_supporting_subspaces.statement}, and therefore
$C\setminus L$ is convex, since it is the same set.

\noindent\textbf{2) ($\implies$)}
If $F$ is a face, then we argue that $\aff(F)$ is a suitable 
supporting subspace. Indeed, by \Cref{loc:characterization_of_convex_faces_by_supporting_subspaces.statement} we know that $F = C  \cap \aff(F)$ and that $C \setminus F$ is convex. Note that $C \setminus F = C \setminus (C \cap \aff(F))= C \setminus \aff(F)$, and therefore $C \setminus \aff(F)$ is convex, while $F = C\setminus \aff(F)$, 
which shows that $\aff(F)$ is a supporting subspace that selects
$F$ by intersection.

\noindent\textbf{2) ($\impliedby$)}
We are given a supporting subspace $L$ such that $F = L \cap C$. Since $\aff(F) \subseteq L$,
$\aff(F) \cap C \subseteq L \cap C$.
On the other hand, $L \cap C = F \subseteq \aff(F) \cap C$ because $F \subseteq C$.
Therefore, $F = \aff(F) \cap C$, the first of the two properties in \Cref{loc:characterization_of_convex_faces_by_supporting_subspaces.statement}.
For the second property, $C \setminus F = C \setminus (C \cap L) = C \setminus L,$
and therefore $C \setminus F$ is convex.
% TODO rename this note, s missing
% TODO latex format this proof.
\end{proof}
In this work, we focus on ``local" supporting subspaces; the
subset of supporting subspaces that contain a point $x \in C$.
These subsets have some additional nice algebraic
properties. We find it useful to generalize this idea of ``locality" to
supporting subspaces that are localized at a set $S \subseteq C$ instead
of only at points, i.e., we consider all supporting subspaces of $C$ that also contain $S$.
To emphasize this connection, we say that a supporting subspace is ``at $S$" to describe its containment
of $S$. The reader should keep in
mind the important special case of $S$ being a singleton $S = \{x\}$,
for which the results are localized at a point.

Any supporting subspace $L$ at $S$ has its affine offset determined by
$S$. Recall that $L-L$ is the tangent space of $L$, i.e., the translation
of $L$ that becomes a (non-affine) subspace. With this, it becomes
clear that $L = (L-L) + S$. We can always deduce the correct affine
offset from $S$ alone. Therefore, we find it convenient to describe the
set of supporting subspaces that contain $S$ through their
tangent spaces. We henceforth call the tangent spaces of supporting subspaces 
``supporting subspaces" as well, and whether we mean 
the affine subspace or its tangent space will be clear from context.
\begin{definition}[Supporting subspace at S]
\label{loc:supporting_subspace_at_s.statement}
Let $C \subseteq X$ be a convex set and $S \subseteq X$ a non-empty subset. 
We call a subspace $U \subseteq X$ a \emph{supporting subspace} of $C$ at $S$ if:
\begin{enumerate}
\item $S - S \subseteq U$ 
\item $U + S$ is an (affine) supporting subspace of $C$ as defined in \Cref{loc:supporting_subspace.statement}.
\end{enumerate}

We define $\mathcal{F}(C; S)$ to be the set of all subspaces $U$
satisfying both conditions above for fixed $C$ and $S$.
\end{definition}
When $S$ is the singleton $\{x\}$, the requirement that $S-S \subseteq U$ is
trivially satisfied. When $S$ is not a singleton, this assumption guarantees
that $\forall s_1, s_2 \in S, U + s_1 =  U + s_2$. This implies that
$\forall s \in S, U+s = U + S$, and so the Minkowski sum $U + S$ is
a simple affine offset of $U$, for which $S \subseteq U+S$. In other
words, the supporting subspace $U + S$ contains $S$.
We introduce the notion of a \emph{nested normal cone}. Let $C \subseteq \mathbb{R}^n$ be convex and $U \subseteq \mathbb{R}^n$ be a subspace, then $\forall x \in \mathbb{R}^n$,
\begin{equation*}
N_C^U(x) := N_{C \cap (x + U)}(x) \cap U.
\end{equation*}
We would also like for our normal cones to located \emph{at a set} $S \subseteq C$, similarly to supporting subspaces.
\begin{equation*}
N_C(S) := \{v \in \mathbb{R}^n \mid \forall s \in S, c \in C,  \langle c-s, v\rangle \le 0\}.
\end{equation*}
The hyperplanes corresponding to the vectors in this normal cone are
supporting hyperplanes of $C$ that contain $S$. Note also that $N_C(S) = \bigcap_{s \in S} N_C(s).$
The idea of normal cones that are located at sets appears in the earlier work  
\cite{lu_NormalFansPolyhedral_2008}, where normal cones are defined for faces of
polytopes, instead of at points. 
Combining the two generalizations, we get the following.
\begin{definition}[Nested normals]
\label{loc:nested_normals.statement}
Let $C \subseteq \mathbb{R}^n$ be a convex set, and $S$ a
non-empty subset of $C$. Then for a subspace $U \subseteq \mathbb{R}^n$
such that $S-S \subseteq U$, and with $C' := C \cap (U + S)$,
\begin{equation*}
N_C^U(S) := \{v \in U \mid \forall c \in C', s \in S, \langle c-s, v\rangle \le 0\}
\end{equation*}
is a \emph{nested normal cone} of $C$ at $S$,
and we label any $v \in N_C^U(S)$
 a \emph{nested normal} of $C$ at $S$ in $U$.
\end{definition}
The normal cone at a set has an elegant
connection to the notion of relative interior of that set: the normal
cone at points in the relative interior match the normal cone of the set.
\begin{lemma}[Normal cones in the relative interior are representative]
\label{loc:normal_cones_in_the_relative_interior_are_representative.statement}
Let $C \subseteq \mathbb{R}^n$ be convex and $S \subseteq C$ a non-empty convex set.
Let $U \subseteq \mathbb{R}^n$ be a subspace such that $S-S \subseteq U$,
and $x \in \rint(S)$. Then
\begin{equation*}
N_C^U(S) = N_C^U(x).
\end{equation*}
\end{lemma}
\begin{proof}[\hypertarget{loc:normal_cones_in_the_relative_interior_are_representative.proof}Proof of \Cref{loc:normal_cones_in_the_relative_interior_are_representative.statement}]

That $N_C^U(S) \subseteq N_C^U(x)$ follows from simple inclusion $x \in S$.

For the containment ($\supseteq$), WLOG let $U = \mathbb{R}^n$.
We can make this assumption because the normal
cones are invariant under the translation $U + S \to U$,
and so we have an equivalent normal cone inside the subspace $U$. Since $U$
is a finite dimensional vector space, it is isomorphic to $\mathbb{R}^n$
for some $n \in \mathbb{N}$. 

Let $v \in N_C(x)$. Then $\forall s \in S, \langle v, s\rangle \le \langle v, x\rangle$ because $S \subseteq C$. 
We show that this inequality is in fact always an equality.
Arguing by contradiction, assume that there is an $s \in S$ such that $\langle v, s\rangle < \langle v, x\rangle$. Then since $x \in \rint(S)$, there is a $\varepsilon>0$
such that $(\varepsilon B + x) \cap \aff(S) \subseteq S$ for $B$ the unit
ball in $\mathbb{R}^n$. There is
a $\lambda > 0$ such that $s' := x + \lambda(s-x) \in \varepsilon B + x$.
Then note that $s' \in (\varepsilon B + x) \cap \aff(S) \subseteq S$, and
\begin{equation*}
\langle v, s'\rangle =  \langle v, x\rangle - \lambda \langle v, s-x\rangle.
\end{equation*}
The second term is strictly positive, therefore $\langle v, s'\rangle> \langle v, x\rangle$.
But this is a contradiction with $v \in N_C(x)$.
Therefore, we find that $\forall s \in S,$ $\langle v, s\rangle = \langle v, x\rangle$.
Then $\forall c \in C, s \in S$ $\langle c-s, v\rangle =  \langle c-x, v\rangle \le 0$,
and therefore $v \in N_C(S)$.
\end{proof}
Next, we show that the nested normals \emph{at a set} $S$ are orthogonal with $S - S$.
\begin{lemma}[Normal cone everywhere is the orthogonal subspace]
\label{loc:normal_cone_everywhere_is_the_orthogonal_subspace.statement}
Let $C \subseteq \mathbb{R}^n$ be a convex set, and $U \subseteq \mathbb{R}^n$ a subspace. Then for any non-empty $S \subseteq C$ such that $S-S \subseteq U$,
\begin{equation*}
N_C^U(S) \subseteq (S - S)^\perp \cap U.
\end{equation*}
Further, when $S = C \cap (U+S)$, the sets are equal.
\end{lemma}
\begin{proof}[\hypertarget{loc:normal_cone_everywhere_is_the_orthogonal_subspace.proof}Proof of \Cref{loc:normal_cone_everywhere_is_the_orthogonal_subspace.statement}]

Let $y \in N_C^U(S)$. By \Cref{loc:nested_normals.statement}, $\forall z, x \in S,$ since $x \in C$ and $z \in S$, $\langle x-z, y\rangle \le 0,$
and since $x \in S$ and $z \in C$, $\langle x-z, y\rangle \ge 0$. Then $y \perp S-S$, which shows the inclusion.

Next we show the containment ($\supseteq$), in the case that $S = C \cap (U+S)$.
Let $y \in (S-S)^\perp \cap U$. For any $x \in S, z \in C \cap (U+S)$ we need to show that $\langle z-x, y\rangle \leq 0$.
Note that $z \in S$, so $x-z  \in S-S$, and by definition of $y$, $\langle z-x,y\rangle=0$, which proves the statement.
\end{proof}
The core property of nested normals is that they describe
the lattice structure of supporting subspaces, which is closely related to the 
lattice structure of the faces. We make this idea precise in the next two results. 
\begin{theorem}[Nested normals induce supporting subspaces]
\label{loc:nested_normals_induce_supporting_subspaces.statement}
Let $C \subseteq \mathbb{R}^n$ be convex, $S \subseteq C$ non-empty, $U \subseteq \mathbb{R}^n$ a subspace, and $v \in N_C^U(S)$. Let $V \in \mathcal{F}(C;S)$ be such that $V \subseteq U$. Then 
\begin{equation*}
V \cap v^\perp \in \mathcal{F}(C;S).
\end{equation*}
\end{theorem}
\begin{proof}[\hypertarget{loc:nested_normals_induce_supporting_subspaces.proof}Proof of \Cref{loc:nested_normals_induce_supporting_subspaces.statement}]

First, note that if $v=0$, then $V \cap v^\perp=V$, and there result holds. We now consider the case where $v \neq 0$.

By \Cref{loc:normal_cone_everywhere_is_the_orthogonal_subspace.statement}, $v \perp S-S$, and therefore 
$S-S \subseteq V \cap v^\perp$. 

We make two observations. 
First, since $v \in N_C^U(S)$ and $V \subseteq U$, $\forall y \in C \cap (V+S)$, $\forall s \in S,$ we have that $\langle y, v\rangle \le \langle s, v\rangle$. Second,
this inequality is tight precisely for those values of $y$ that are also contained in $v^\perp + S$, i.e.,
\begin{align*}
&\{y \in C \cap (V+S) \mid \forall s \in S, \langle y, v\rangle = \langle s, v\rangle\}\\
=&((V +S) \cap C) \cap (v^\perp + S) = (V \cap v^\perp + S) \cap C.
\end{align*}
These two facts imply that 
\begin{equation*}
(V \cap v^\perp + S) \cap C = \argmax_{y \in C \cap (V+S)} \langle y, v\rangle.
\end{equation*}
The r.h.s. is an exposed face of the face $C \cap (V+S)$, so it is itself a
face, which by
\Cref{loc:relation_of_supporting_subspaces_to_faces.statement}
implies that $V \cap v^\perp  \in \mathcal{F}(C;S)$.
% Would be better to cite the "exposed face of a face is a face" bit,
% but this is a bit trivial, and no good sources are found. Everybody thinks of it as trivial, or "well-known".
\end{proof}
We next show that all supporting subspaces are ``reached" by nested normals
in supporting subspaces.
\begin{theorem}[Completeness of the composition of nested normals]
\label{loc:completeness_of_the_composition_of_nested_normals.statement}
Let $C \subseteq \mathbb{R}^n$ be convex, $S \subseteq C$ non-empty,
and $V, U \in \mathcal{F}(C;S)$ such that $U \subseteq V$.
Then there is a sequence $V_0, \ldots, V_k \in \mathcal{F}(C;S)$ 
with $k= \dim(V)-\dim(U)$ such that $V_0=V$, $V_k=U$, and for each
$i \in \{ 1, \ldots,k\}$ there is a $v_i \in N_C^{V_{i-1}}(S) \setminus \{0\}$ such that $V_i = V_{i-1} \cap v_i^\perp$.
\end{theorem}
This result is similar to the lexicographic characterization of the faces
of convex sets in \cite[Proposition 6]{martinez-legaz_LEXICOGRAPHICALCHARACTERIZATIONFACES_},
which states that every face can be obtained by a sequence of faces
where each face is an exposed face of the preceding face. Let us state
a technical lemma that we will need for the proof.
\begin{lemma}[Affine slices preserve supporting subspaces]
\label{loc:affine_slices_preserve_supporting_subspaces.statement}
Let $C \subseteq \mathbb{R}^n$ be convex and $S \subseteq C$ non-empty.
Let $U \in \mathcal{F}(C;S)$, and $V$ any subspace such that
$S-S \subseteq V$. Then $U \cap V \in \mathcal{F}(C \cap (V+S);S)$.
\end{lemma}
\begin{proof}[\hypertarget{loc:affine_slices_preserve_supporting_subspaces.proof}Proof of \Cref{loc:affine_slices_preserve_supporting_subspaces.statement}]

Since $C \setminus (U + S)$ is convex, 
$(V+S)  \cap (C \setminus (U + S))$ is also convex. 
We can write this set as $[(V+S) \cap C] \setminus [(V+S) \cap (U+S)]$,
which is the same as $((V+S) \cap C) \setminus ((V \cap U)+S)$, because $S-S \in V \cap U$.
Since this set is convex, and $S-S \subseteq V  \cap U$, we find that $V \cap U \in \mathcal{F}((V +S) \cap C;S)$.
\end{proof}
\begin{proof}[\hypertarget{loc:completeness_of_the_composition_of_nested_normals.proof}Proof of \Cref{loc:completeness_of_the_composition_of_nested_normals.statement}]

It suffices to show the following statement: given any $U,V \in \mathcal{F}(C;S)$ such that $V \supsetneq U$,
there exists a $v \in N_C^V(S) \setminus \{0\}$ such that $v \perp U$. 
If this statement is shown to hold, applying it recursively generates the sequence of supporting subspaces $V_1, \ldots, V_k$ in a finite number of steps. 
Note that the dimension of the supporting subspaces decreases by one at each iteration while always containing $U$, which implies that $U \subseteq V_k$ with $\dim(U)=\dim(V_k)$, and therefore $U=V_k.$

We now show the above statement.
Let $C' = C \cap (V + S)$, $\bar{C} = \proj_{U^\perp} C'$, and $\bar{U} =  \proj_{U^\perp} (U + S)$.
Note that $\bar{C}$ is a convex set and $\bar{U}$
a singleton $\{x\} \subseteq \bar{C}$.  We argue that $\bar{U}$ is a face of $\bar{C}$.
Arguing by contradiction, we hypothesize the $\bar{U}$ is not a face of $\bar{C}$, in which case there are $\bar{y}, \bar{z} \in \bar{C}\setminus \bar{U}$
such that $x = \lambda\bar{y} + (1-\lambda)\bar{z}$ for $\lambda \in (0, 1)$. This implies that there are elements $y, z \in C'$ such that $\bar{y}= \proj_{U^\perp}y$ 
and $\bar{z}= \proj_{U^\perp}z$, with $w =  \lambda y+(1-\lambda)z$. Note that the line segment $[y,z]$ has endpoints $y, z \in C' \setminus (U+S)$ with a point $w  \in ]y,z[$  that is also contained in $C' \cap (U+S)$. This implies that $C' \cap (U+S)$ is not a face of $C'$, which, by \Cref{loc:relation_of_supporting_subspaces_to_faces.statement}, means that
$U \notin \mathcal{F}(C';S)$. This is a contradiction because in fact, $U  \in \mathcal{F}(C';S)$, as we now show. We know that $U  \in \mathcal{F}(C;S)$, so by
\Cref{loc:affine_slices_preserve_supporting_subspaces.statement}, $U = U \cap V \in \mathcal{F}(C \cap (V+ S);S)$, i.e., $U \in \mathcal{F}(C';S)$.
Therefore, we indeed have a contradiction, from which we find that $\bar{U}$ is a face of $\bar{C}$.

Note that $\proj_{U^\perp}(V + S) \subseteq V + S$. This becomes
apparent from the following equality:
$V + S = (V + S) + U = \proj_{U^\perp} (V + S) + U,$
where the first equality holds because $U \subseteq V$ implies that $V = V + U$. 

Then since $U+S, C \cap (V +S ) \subseteq V + S$, we find that $\bar{U}, \bar{C} \subseteq (V+S) \cap U^\perp$.
Since $\bar{U}$ is a face of $\bar{C}$, 
there exists a supporting hyperplane $H$ of $\bar{C}$, where
the hyperplane is relative to the affine subspace $(V+S) \cap U^\perp$, such that
$\bar{U} \subseteq H$.
Indeed, if there is no such hyperplane $H$, $\bar{U}$ would then be a singleton
in the interior of $\bar{C}$ (where the interior is relative to $(V+S)\cap U^\perp$, i.e., with the
induced topology in $(V+S)\cap U^\perp$).

Take $\bar{v} \in V \cap U^\perp$ to be the unit
normal to $H$ that is also in $N_{\bar{C}}^{V \cap U^\perp}(x)$. 
Then note that $\bar{v} \in N_C^{V}(S) \cap U^\perp$, which is the vector
we had to find.
\end{proof}
Together, \Cref{loc:nested_normals_induce_supporting_subspaces.statement}
and \Cref{loc:completeness_of_the_composition_of_nested_normals.statement} 
enables a powerful tool that is central to our proofs: we can
perform structural induction on the tree of supporting subspaces, using only nested normals.
Indeed, the set of supporting subspaces forms a tree, where the nodes
are the supporting subspaces, the root is
$\mathbb{R}^n$ (note that it is always a supporting subspace, and that
it contains any other supporting subspace), and every edge of the tree is
associated with a nested normal. Crucially, \Cref{loc:completeness_of_the_composition_of_nested_normals.statement}
shows that starting from the root, and generating the tree by using the nested
normals, we attain all supporting subspaces. Induction on this tree
then takes the following form.
\begin{corollary}[Induction by nested normal]
\label{loc:induction_by_nested_normal.statement}
Let $C$ be a convex set, $S \subseteq C$ a non-empty subset, and a statement $A:\mathcal{F}(C;S) \to \{\text{true}, \text{false}\}$. If the following statements hold: 
\begin{enumerate}
\item $A(\mathbb{R}^n),$
\item  $\forall U \in \mathcal{F}(C; S)$, $v \in N^U_C(S)$, $A(U) \implies A(U \cap v^\perp)$,
then it follows that
\begin{equation*}
\forall V \in \mathcal{F}(C;S), \, A(V).
\end{equation*}
\end{enumerate}
\end{corollary}
\begin{proof}[\hypertarget{loc:induction_by_nested_normal.proof}Proof of \Cref{loc:induction_by_nested_normal.statement}]

Consider any $U \in \mathcal{F}(C;S)$, and let the statement $A$ satisfy both
stated conditions. Then by \Cref{loc:completeness_of_the_composition_of_nested_normals.statement},
we have supporting subspaces $V_0, \ldots, V_k \in \mathcal{F}(C;S)$
and nested normals $v_1, \ldots,  v_k$ as in \Cref{loc:completeness_of_the_composition_of_nested_normals.statement}. $A(V_0)= \mathrm{true}$ 
because $V_0 = \mathbb{R}^n$. Also, $\forall i \in [k]$, $A(V_{i-1}) \implies
A(V_i)$ because of the existence
of the nested normal $v_i \in N_C^{V_{i-1}}(S) \setminus \{0\}$ with the property that 
$V_i= V_{i-1} \cap v_i^\perp$. Then $A(U)$ holds by classical induction,
because $U = V_k$.
\end{proof}
\subsection{Minimal supporting subspace}
\label{loc:narrative_for_conditional_subdifferentials.statement.minimal_supporting_subspace}
Recall that the smallest face of a convex set $C$ containing some $x \in C$
is $F_C(x)$, the face of $C$ \emph{generated} by $x$. It is the only face
with $x$ in its relative interior. In this section, we develop
an analogous notion for supporting subspaces, and characterize this
supporting subspace by using nested normals.
\begin{definition}[The generated supporting subspace]
\label{loc:generated_supporting_subspace.statement}
Given a convex set $C \subseteq \mathbb{R}^n$ with $S \subseteq C$ non-empty, 
we define the \emph{generated supporting subspace}
$H_C(S)$ of $C$, generated by $S$, to be the supporting subspace
that is contained in every other supporting subspace in $\mathcal{F}(C;S)$.
\end{definition}
We show existence and uniqueness for the above definition, starting with a lemma.
\begin{lemma}[Supporting subspaces are closed under intersections]
\label{loc:supporting_subspaces_are_closed_under_intersections.statement}
For $C \subseteq \mathbb{R}^n$ convex and $S \subseteq C$ non-empty, let $U, V \in \mathcal{F}(C; S)$. Then $U  \cap V \in \mathcal{F}(C;S)$.
\end{lemma}
\begin{proof}[\hypertarget{loc:supporting_subspaces_are_closed_under_intersections.proof}Proof of \Cref{loc:supporting_subspaces_are_closed_under_intersections.statement}]

The subspace $U \cap V$ is such that
\begin{equation*}
(U \cap V + S) \cap C = ((U + S) \cap C) \cap ((V + S) \cap C).
\end{equation*}
The r.h.s. is the intersection of two convex faces of $C$. Their intersection
is then also a face (immediate from the definition of faces).
By \Cref{loc:characterization_of_convex_faces_by_supporting_subspaces.statement}, $C \setminus [(U \cap V + S) \cap C]$
is convex. The convexity of this set, which we can write as $C \setminus ((U \cap V) + S)$,
implies that $U \cap V \in \mathcal{F}(C;S)$.
\end{proof}
\begin{proposition}
\label{loc:existence_and_uniqueness_of_the_generated_supporting_subspace.statement}
The generated supporting subspace of \Cref{loc:generated_supporting_subspace.statement} exists and is unique. 
\end{proposition}
\begin{proof}[\hypertarget{loc:existence_and_uniqueness_of_the_generated_supporting_subspace.proof}Proof of \Cref{loc:existence_and_uniqueness_of_the_generated_supporting_subspace.statement}]

For existence, starting at any supporting subspace $U \in \mathcal{F}(C;S)$,
either $U$ contains no other supporting subspace in
$\mathcal{F}(C;S)$, or we have a $U' \in \mathcal{F}(C;S)$ such that
$U' \subsetneq U$. Repeating this argument, we eventually find
a supporting subspace that contains no other, because the dimension is reduced
at every step, and the ambient dimension is finite.

For uniqueness, let $V_1,  V_2 \in \mathcal{F}(C;S)$ be such that 
they properly contain no other 
supporting subspace in $\mathcal{F}(C;S)$. Then by
\Cref{loc:supporting_subspaces_are_closed_under_intersections.statement}, $V_1 \cap V_2 \in \mathcal{F}(C;S)$ is properly contained in both $V_1$ and $V_2$, which is a contradiction.
\end{proof}
As might be expected, the generated supporting subspace is closely related to the generated face.
\begin{lemma}[Relation between the generated face and the generated supporting subspace]
\label{loc:relation_between_the_generated_face_and_the_generated_supporting_subspace.statement}
With $C \subseteq \mathbb{R}^n$ convex and $S \subseteq C$ non-empty,
\begin{enumerate}
\item $F_C(S) = C  \cap (H_C(S) + S).$
\item $H_C(S) = \Span(F_C(S)-F_C(S)).$
\end{enumerate}
\end{lemma}
\begin{proof}[\hypertarget{loc:relation_between_the_generated_face_and_the_generated_supporting_subspace.proof}Proof of \Cref{loc:relation_between_the_generated_face_and_the_generated_supporting_subspace.statement}]

By \Cref{loc:characterization_of_convex_faces_by_supporting_subspaces.statement},
$H_C(S)$ being the smallest supporting subspace implies that $C \cap (H_C(S) + S)$
is the smallest face containing $S$, because any properly contained, smaller face
would have its own supporting subspace (by \Cref{loc:relation_of_supporting_subspaces_to_faces.statement}), which would have to be properly contained in $H_C(S)$.

For the second statement, $\Span(F_C(S)-F_C(S))$ is a supporting subspace because $\Span(F_C(S)-F_C(S)) + S = \aff(F_C(S))$,
which is a supporting
subspace, as implied by \Cref{loc:characterization_of_convex_faces_by_supporting_subspaces.statement}.
Further, $\Span(F_C(S)-F_C(S))$ is contained in any other supporting subspace in
$\mathcal{F}(C;S)$, because given $U \in \mathcal{F}(C;S)$ such that
$U \subsetneq \Span(F_C(S)-F_C(S))$, we would then have $C  \cap (U + S)$ as a face of
$C$ containing $S$ that is properly contained in $F_C(S)$, in contradiction with
the definition of $F_C(S)$.
\end{proof}
We can now restate \Cref{loc:the_joint_supporting_subspace.statement} with
specialized terms.
\begin{definition}[Reformulated definition of the joint supporting subspace]
\label{loc:the_joint_supporting_subspace.technical}
The \emph{joint supporting subspace} of two convex sets $C, D \subseteq \mathbb{R}^n$, as defined in \Cref{loc:the_joint_supporting_subspace.statement}, is
\begin{equation*}
T(C, D) := H_{C-D}(0).
\end{equation*}
\end{definition}
The next result characterizes generated supporting subspaces in terms of nested normals.
\begin{theorem}[The generated supporting subspace is the only supporting subspace with no nested normals]
\label{loc:the_generated_supporting_subspace_is_the_only_supporting_subspace_with_no_nested_normals.statement}
Given a convex set $C \subseteq \mathbb{R}^n$ with $S \subseteq C$ non-empty, the \emph{generated supporting subspace} $H_C(S)$
is the only element in $\mathcal{F}(C;S)$ 
such that $N_C^{H_C(S)}(S) = \{0\}$.
\end{theorem}
\begin{proof}[\hypertarget{loc:the_generated_supporting_subspace_is_the_only_supporting_subspace_with_no_nested_normals.proof}Proof of \Cref{loc:the_generated_supporting_subspace_is_the_only_supporting_subspace_with_no_nested_normals.statement}]

The generated supporting subspace does not contain any non-trivial nested normals, because any non-trivial nested normal $v \in N_C^{H(C)}(S) \setminus \{0\}$ induces the supporting subspace $H_C(S) \cap v^\perp \subsetneq H_C(S)$ by \Cref{loc:nested_normals_induce_supporting_subspaces.statement}.

For any supporting subspace $V \in \mathcal{F}(C;S)$ such that $N_C^V(S)= \{0\}$,
we argue that $V = H_C(S)$. Note that $H_C(S) \subseteq V$ by \Cref{loc:generated_supporting_subspace.statement}.
The inclusion cannot be proper, because otherwise, by \Cref{loc:completeness_of_the_composition_of_nested_normals.statement}, there would be a nontrivial nested normal $v \in N_C^V(S)\setminus \{0\}$.
\end{proof}
\subsection{Nested normals of a difference of sets}
\label{loc:narrative_for_conditional_subdifferentials.statement.nested_normals_of_a_difference_of_sets}
We characterize the supporting subspaces of a difference of sets $C-D$ from the
supporting subspaces and nested normals of the individual sets $C$ and $D$ at $C
\cap D$. One might hope for the simple statement that $\mathcal{F}(C;S) \cap
\mathcal{F}(D;S) = \mathcal{F}(C-D;0)$ for $S \subseteq C \cap D$. Unfortunately, though $(\supseteq)$
turns out to hold, $(\subseteq)$ does not in general. 
\begin{example}[Mismatched nested normals]
\label{loc:example_mismatched_nested_normals.statement}
Let $h:\mathbb{R} \to  \mathbb{R}, h(x):= x^2$, $C := \epi\min(\indicator_{\mathbb{R}_{-}},  h)$
and $D :=  \mathrm{hypo}\max(-\indicator_{\mathbb{R}_{-}},  -h)$, where $\mathrm{hypo}$ denotes the hypograph of the concave function. Then $\{0\}$ is a
face of both $C$ and $D$, yet $C-D = \mathbb{R} \times \mathbb{R}_{+}$, and
therefore $\{0\} \notin \mathcal{F}(C-D;0)$. 
\end{example}
There is therefore a need for some
additional condition on supporting subspaces
shared by $C$ and $D$ for them to also be
supporting subspaces of $C-D$. The next result
allows us to derive such a condition by using nested normals.
\begin{theorem}[Decomposition of supporting subspaces of differences]
\label{loc:decomposition_of_supporting_subspaces_of_differences.statement}
Let $C, D \subseteq \mathbb{R}^n$ be convex with $S \subseteq C \cap D$ non-empty, and $U \in \mathcal{F}(C-D;0)$. Then
\begin{enumerate}
\item $U \in \mathcal{F}(C;S) \cap \mathcal{F}(D;S).$
\item $N^U_{C-D}(0) = N^U_C(S) \cap (-N^U_D(S)).$
\item $C \cap (U+S) - D \cap (U+S) = (C-D) \cap U.$
\end{enumerate}
\end{theorem}
The next corollary shows that the missing condition is the existence of a synchronized ``path" of nested normals for $C$ and $D$.
\begin{corollary}[Characterization of the faces of differences of convex sets]
\label{loc:characterization_of_the_faces_of_differences_of_convex_sets.statement}
Let $C, D \subseteq \mathbb{R}^n$ be convex and $S \subseteq C \cap D$ non-empty.
The elements of $\mathcal{F}(C-D;0)$ are precisely the elements 
$U \in \mathcal{F}(C;S) \cap \mathcal{F}(D;S)$
for which there exists a sequence $V_0, \ldots, V_k \in \mathcal{F}(C;S) \cap  \mathcal{F}(D;S)$
such that $V_0=\mathbb{R}^n$, $V_k = U$, and
such that for each $i \in \{ 1, \ldots,k\}$ there is a $v_i \in N_C^{V_{i-1}}(S) \cap -N_D^{V_{i-1}}(S) \setminus \{0\}$ such that $V_i = V_{i-1} \cap v_i^\perp$.
\end{corollary}
\begin{proof}[\hypertarget{loc:characterization_of_the_faces_of_differences_of_convex_sets.proof}Proof of \Cref{loc:characterization_of_the_faces_of_differences_of_convex_sets.statement}]

That any $U$ with such a sequence is in $\mathcal{F}(C-D;0)$ follows by first noticing that each $v_i$ in the sequence belongs to
$N^U_{C-D}(0)$ by \Cref{loc:decomposition_of_supporting_subspaces_of_differences.statement}. By applying \Cref{loc:nested_normals_induce_supporting_subspaces.statement} sequentially, it follows that $U  \in \mathcal{F}(C-D;0)$.

That any element $U \in \mathcal{F}(C-D;0)$ has such a sequence follows from \Cref{loc:completeness_of_the_composition_of_nested_normals.statement}, which shows that there is a sequence of nested normals of $C-D$ at $0$ that, starting from $\mathbb{R}^n$, attains $U$. By \Cref{loc:decomposition_of_supporting_subspaces_of_differences.statement}, this sequence corresponds to a sequence $v_i,  \ldots , v_k$ as described in the statement.
\end{proof}
We can now better understand \Cref{loc:example_mismatched_nested_normals.statement}.
The supporting subspace $\{0\}$ of $C$ and $D$ is obtained via the sequence of nested normals
$\{(0,  -1), (1, 0)\}$ for $C$ and $\{(0, 1), (1, 0)\}$ for $D$. Only the
first nested normals are opposite to each other, and therefore there is no
synchronized path of nested normals to $\{0\}$. If we modify the example by
taking $D :=
\mathrm{hypo} \max(-\indicator_{\mathbb{R}_+},  - h)$,
then $\{0\}$ is attained by $\{(0, 1),  (-1, 0)\}$, and the sequence is
synchronized with that of $C$. In this modified example,
$\{0\}$ is indeed a supporting subspace of $C-D$ because $C-D$ is a convex set
such that
$C-D \subseteq \mathbb{R} \times  \mathbb{R}^+$ and $C-D \cap (\mathbb{R} \times \{0\}) = \mathbb{R}_{-} \times \{0\}$, which shows that $\{0\} \in \mathcal{F}(C-D;0)$.

To show \Cref{loc:decomposition_of_supporting_subspaces_of_differences.statement}, we require the following lemma.
\begin{lemma}[Normal cone of the difference of convex sets]
\label{loc:normal_cone_of_the_difference_of_convex_sets.statement}
Given convex sets $C, D \subseteq \mathbb{R}^n$ with $S \subseteq C \cap D$ non-empty,
\begin{equation*}
N_{C-D}(0) = N_C(S)  \cap (- N_D(S)).
\end{equation*}
\end{lemma}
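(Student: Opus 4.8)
The plan is to prove both inclusions directly from the defining inequalities of the normal cones, using only the elementary structure $C - D = \{c - d : c \in C, d \in D\}$ together with the hypothesis $S \subseteq C \cap D$. Recall that $N_{C-D}(0) = \{v \mid \forall w \in C - D,\ \langle w - 0, v\rangle \le 0\} = \{v \mid \forall c \in C, d \in D,\ \langle c - d, v\rangle \le 0\}$, while $N_C(S) = \{v \mid \forall s \in S, c \in C,\ \langle c - s, v\rangle \le 0\}$ and $-N_D(S) = \{v \mid \forall s \in S, d \in D,\ \langle s - d, v\rangle \le 0\}$ (using $N_D(S) = \bigcap_{s\in S}N_D(s)$). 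The key trick throughout is that any point $s \in S$ lies in both $C$ and $D$, so it can serve as a ``test point'' on either side.

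\textbf{Inclusion $\supseteq$.} Suppose $v \in N_C(S) \cap (-N_D(S))$. Fix any $c \in C$ and $d \in D$, and pick any $s \in S$ (non-empty by hypothesis). Write $c - d = (c - s) + (s - d)$. Since $v \in N_C(S)$ and $s \in S$, $c \in C$, we get $\langle c - s, v\rangle \le 0$; since $v \in -N_D(S)$ and $s \in S$, $d \in D$, we get $\langle s - d, v\rangle \le 0$. Adding, $\langle c - d, v\rangle \le 0$. As $c, d$ were arbitrary, $v \in N_{C-D}(0)$.

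\textbf{Inclusion $\subseteq$.} Suppose $v \in N_{C-D}(0)$, so $\langle c - d, v\rangle \le 0$ for all $c \in C$, $d \in D$. To show $v \in N_C(S)$, fix $s \in S$ and $c \in C$; since $s \in D$ as well, apply the hypothesis inequality with the pair $(c, s) \in C \times D$ to obtain $\langle c - s, v\rangle \le 0$, hence $v \in N_C(s)$ for every $s \in S$, i.e. $v \in N_C(S)$. Symmetrically, to show $v \in -N_D(S)$, fix $s \in S$ and $d \in D$; since $s \in C$, apply the hypothesis inequality with $(s, d) \in C \times D$ to get $\langle s - d, v\rangle \le 0$, i.e. $v \in -N_D(s)$ for every $s \in S$. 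Thus $v \in N_C(S) \cap (-N_D(S))$.

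I do not anticipate a genuine obstacle here: the statement is essentially a bookkeeping identity, and the only thing one must be careful about is keeping the signs straight in the $-N_D(S)$ term and making sure $S \neq \emptyset$ is used (it is, to have an $s$ available in the $\supseteq$ direction). The conceptual content — that an element of $S$ can be inserted as an intermediate point because $S \subseteq C \cap D$ — is exactly what makes the decomposition $c - d = (c-s) + (s-d)$ work, and this same idea will presumably be localized to a subspace $U$ in the proof of \Cref{loc:characterization_of_the_conditional_normals_of_a_difference_of_sets.statement} that follows.
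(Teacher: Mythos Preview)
Your proof is correct and matches the paper's argument essentially line for line: both directions use the same decomposition $c-d=(c-s)+(s-d)$ with $s\in S\subseteq C\cap D$ serving as the intermediate test point, and the $\subseteq$ direction specializes $d=s$ (respectively $c=s$) exactly as you do. Your write-up is in fact slightly more careful than the paper's about the sign in the $-N_D(S)$ term.
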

\begin{proof}[\hypertarget{loc:normal_cone_of_the_difference_of_convex_sets.proof}Proof of \Cref{loc:normal_cone_of_the_difference_of_convex_sets.statement}]

We start with the inclusion $( \subseteq )$.
Any $v  \in N_{C-D}(0)$ has the property that $\forall c \in C, d \in D$, $\langle v, (c-d)-0\rangle \le 0$.
Then $\forall c  \in C, s \in S$, $\langle v, c-s\rangle \leq 0$ because $s \in D$, and therefore $v \in N_C(S)$. By a similar argument, $v \in -N_D(S)$, which shows the inclusion.

For the containment ($\supseteq$), fix some $s \in S$.
let $v \in N_{C}(S) \cap (-N_{D}(S))$. Then $\forall c \in C, d \in D$ 
\begin{equation*}
\langle v, c-d\rangle= \langle v, c-s\rangle  + \langle v, s-d\rangle \leq 0,
\end{equation*}
and so $v \in N_{C-D}(0)$.
\end{proof}
\begin{proof}[\hypertarget{loc:decomposition_of_supporting_subspaces_of_differences.proof}Proof of \Cref{loc:decomposition_of_supporting_subspaces_of_differences.statement}]

We argue by nested normal induction (from \Cref{loc:induction_by_nested_normal.statement}) over the supporting subspaces of $C-D$. Fixing some $S \subseteq C\cap D$, let $A(U)$ be the statement that all three points in the statements hold for supporting subspace $U \in \mathcal{F}(C-D;0)$.

The base case, where $U = \mathbb{R}^n$, holds because $\mathbb{R}^n$ is always a supporting subspace. Point 2) is given by \Cref{loc:normal_cone_of_the_difference_of_convex_sets.statement}, and point 3) holds trivially.

For the inductive step, let $V \in \mathcal{F}(C-D;0)$ be such that $A(V)$ is true. Let $v \in N_{C -D}^V(0)\setminus \{ 0 \}$, and $U = V  \cap v^\perp$. In the rest of the proof, we show that $A(U)$ is true. Note that by point 2) of $A(V)$, $v \in N_C^V(S) \cap (-N_D^V(S))$, which by \Cref{loc:nested_normals_induce_supporting_subspaces.statement} implies that $U \in \mathcal{F}(C;S) \cap \mathcal{F}(D;S)$, and this shows point 1) of $A(U)$.

We next show that for $C'=C\cap (V+S)$, $D' = D \cap (V+S)$,
\begin{equation}
\label{eq:identity_u}
(C' - D') \cap U = (C' \cap (U+S))-(D' \cap (U+S)).
\end{equation}
We start with the inclusion ($\subseteq$): Take $z \in C'$, $y \in D'$ with $z-y \in U$.
Then since $U \subseteq v^\perp$, $\langle z-y, v\rangle = 0$, and therefore $\forall s \in S$, $\langle z-s, v\rangle - \langle y-s, v\rangle = 0$.
That $\langle z-s, v\rangle \leq 0$ follows from $v \in N_{C}^V(S)$ since $z  \in C' \subseteq C$ and $z-s \in V$, and similarly, $\langle y-s, v\rangle \ge 0$ because $v \in N_D^V(S)$.
But then the only way that $\langle z-s, v\rangle - \langle y-s, v\rangle = 0$
is if $\langle z-s, v\rangle = 0$ and $\langle y-s, v\rangle = 0$ individually.
It follows that $z, y \in U + S$, and so the inclusion holds.
The containment ($\supseteq$) is trivial, so we have shown \Cref{eq:identity_u}.

By point 3) of $A(V)$, $C'-D' = (C -D) \cap V$. Also, since $U \subseteq V$, $C' \cap (U+S)$ = $C \cap (U+S)$, and similarly for $D$. Applying these identities on both sides of \Cref{eq:identity_u}, we find that
\begin{equation*}
(C - D) \cap U = C \cap (U+S) - D \cap (U+S).
\end{equation*}
This shows that point 3) of $A(U)$ is true.

To show point 2) of $A(U)$, we compute
\begin{align}
N_C^U(S) \cap (-N_D^U(S)) &= [N_{C \cap (U+S)}(S) \cap (-N_{D \cap (U+S)}(S))] \cap U \label{eq:normals:1}\\
&= N_{C \cap (U+S) - D \cap (U+S)}(0) \cap U \label{eq:normals:2}\\
&=N_{(C-D)\cap U}(0) \cap U = N_{C-D}^U(0), \label{eq:normals:3}
\end{align}
where \Cref{eq:normals:1} holds by definition of nested normals, \Cref{eq:normals:2} follows from \Cref{loc:normal_cone_of_the_difference_of_convex_sets.statement} applied to the sets $C  \cap (U+S)$ and $D \cap (U+S)$, and \Cref{eq:normals:3} follows from point 3) of $A(U)$. This shows that point 2) of $A(U)$ holds for $U$, and the proof is complete.
\end{proof}
We now have the tools to prove \Cref{loc:containment_of_the_intersection_in_the_joint_supporting_subspace.statement}.
\begin{proof}[\hypertarget{loc:containment_of_the_intersection_in_the_joint_supporting_subspace.proof}Proof of \Cref{loc:containment_of_the_intersection_in_the_joint_supporting_subspace.statement}]

By \Cref{loc:the_joint_supporting_subspace.technical},
$T(C, D) = H_{C-D}(0)$. So $T \in \mathcal{F}_{C-D}(0)$, which
by \Cref{loc:decomposition_of_supporting_subspaces_of_differences.statement} implies that,
taking $S = C \cap D$, $T \in \mathcal{F}_C(C \cap D) \cap \mathcal{F}_D(C \cap D)$.
By the definition of supporting subspaces (\Cref{loc:supporting_subspace_at_s.statement}),
$C \cap D - C \cap D \subseteq T$.
\end{proof}
\section{Proofs}
\label{loc:body.proofs}
We begin with the proof of \Cref{loc:recovery_of_the_classical_results.statement}, after which each subsection is dedicated to
one result from \Cref{loc:body.the_joint_supporting_subspace}.
\begin{proof}[\hypertarget{loc:recovery_of_the_classical_results.proof}Proof of \Cref{loc:recovery_of_the_classical_results.statement}]

First we show that for $C \subseteq \mathbb{R}^n$ convex, and $x \in \rint(C)$, $F_C(x)= C$. 
Note that
\begin{equation*}
N_C^{\Span(C-C)}(x) = N_C^{\Span(C-C)}(C) = (C-C)^\perp \cap \Span(C-C)= \{0\},
\end{equation*}
where the first equality follows by \Cref{loc:normal_cones_in_the_relative_interior_are_representative.statement} and the second by \Cref{loc:normal_cone_everywhere_is_the_orthogonal_subspace.statement}.
Then by \Cref{loc:the_generated_supporting_subspace_is_the_only_supporting_subspace_with_no_nested_normals.statement}, $H_C(x) = \Span(C-C)$, and by \Cref{loc:relation_between_the_generated_face_and_the_generated_supporting_subspace.statement}, $F_C(x) = (\Span(C-C) + x) \cap C = C$.

From \Cref{loc:the_joint_supporting_subspace_reveals_faces.statement}, for $x \in \rint(\dom(f)) \cap \rint(\dom(g))$ and $T_a = T_a(\dom(f), \dom(g))$,
\begin{equation*}
T_a \cap \dom(f) = F_{\dom(f)} (\dom(f) \cap \dom(g)) \supseteq F_{\dom(f)}(x) = \dom(f).
\end{equation*}
From this the results follows.
\end{proof}
\subsection{Regularity in the joint supporting subspace }
\label{loc:body.proofs.regularity_in_the_joint_supporting_subspace}
We now show the main property of the joint supporting subspace, 
\Cref{loc:qualification_conditions_always_hold_in_the_joint_supporting_subspace.statement}.
We first state and prove a lemma which connects the notion of separation
of convex set to a statement reminiscent of the qualification condition
by Mordukhovich in variational analysis: \cite[Theorem
2.19]{mordukhovich_VariationalAnalysisApplications_2018}.
\begin{lemma}[Separation from local normal cones]
\label{loc:separation_from_local_normal_cones.statement}
For convex sets $C, D \subseteq \mathbb{R}^n$ and $S \subseteq C \cap D$ nonempty, the following equivalence holds:
\begin{equation}
\label{eq:main_sep}
N_C(S) \cap (-N_D(S)) \neq \{0\} \iff C \text{ is separated from } D.
\end{equation}
\end{lemma}
\begin{proof}[\hypertarget{loc:separation_from_local_normal_cones.proof}Proof of \Cref{loc:separation_from_local_normal_cones.statement}]

We start with $(\implies)$. There exists a $v \in [N_C(S) \cap (-N_D(S))] \setminus \{0\}$,
and from \Cref{loc:normal_cone_everywhere_is_the_orthogonal_subspace.statement} $v \perp S-S$, whence 
the hyperplane $v^\perp + S$ separates $C$ from $D$.

For the reverse direction $(\impliedby)$, suppose $C$ and $D$ are
separated by a hyperplane $H$. Then $S \subseteq H$, and there exist
$v \in \mathbb{R}^n \setminus \{0\}$ normal to $H$ and $\beta \in \mathbb{R}$
such that
\begin{enumerate}
\item $\langle v, s \rangle = \beta$ for all $s \in S$;
\item $\langle v, c \rangle \ge \beta$ for all $c \in C$;
\item $\langle v, d \rangle \le \beta$ for all $d \in D$.
\end{enumerate}

Thus, for all $y \in C$ and $s \in S$, $\langle v, y - s \rangle \ge 0$,
so $v \in N_C(S)$. Similarly, $-v \in N_D(S)$, whence
$v \in N_C(S) \cap (-N_D(S)) \neq \{0\}$.
\end{proof}
\begin{proof}[\hypertarget{loc:qualification_conditions_always_hold_in_the_joint_supporting_subspace.proof}Proof of \Cref{loc:qualification_conditions_always_hold_in_the_joint_supporting_subspace.statement}]

Recall that from \Cref{loc:the_joint_supporting_subspace.technical}, $T = H_{C-D}(0)$.
Then by
\Cref{loc:the_generated_supporting_subspace_is_the_only_supporting_subspace_with_no_nested_normals.statement},
$N_{C-D}^T(C \cap D) = \{0\}$. Applying
\Cref{loc:decomposition_of_supporting_subspaces_of_differences.statement}, we find that
$N^T_C(C \cap D) \cap (-N^T_D(C \cap D)) = \{0\}$.

By applying \Cref{loc:separation_from_local_normal_cones.statement}
in the space $T_a$, the result follows. Note that \Cref{loc:separation_from_local_normal_cones.statement} is stated in $\mathbb{R}^n$, yet we can apply it
in $T_a$ because $T$ is a finite-dimensional subspace, thereby isomorphic to $\mathbb{R}^n$, and
\Cref{loc:separation_from_local_normal_cones.statement}
is invariant when translating $T$ to $T_a$. 
\end{proof}
\subsection{Analytical characterizations}
\label{loc:body.proofs.analytical_characterizations}
In this section we prove \Cref{loc:the_joint_supporting_subspace_reveals_faces.statement} and 
\Cref{loc:characterization_through_generated_faces.statement}.
\begin{lemma}[Nested normals of constrained set]
\label{loc:nested_normals_of_constrained_set.statement}
Let $C \subseteq \mathbb{R}^n$ be a convex set and $S \subseteq C$ a
nonempty set. Let $U, V \subseteq \mathbb{R}^n$ be subspaces such that $S - S \subseteq U \subseteq V$ and $C \cap (V + S) \subseteq U + S$. Then
\begin{equation*}
N_C^U(S) + (U^\perp \cap V) = N_{C \cap (U + S)}^V(S).
\end{equation*}
\end{lemma}
\begin{proof}[\hypertarget{loc:nested_normals_of_constrained_set.proof}Proof of \Cref{loc:nested_normals_of_constrained_set.statement}]

We begin with the containment ($\supseteq$). 
Let $x \in N_C^V(S).$ Then 
$\forall s \in S, y \in C  \cap (V+S)$, note that
$y-s \in U$, and so
\begin{equation*}
\langle \proj_U x, y-s\rangle = \langle x, y-s\rangle \le 0.
\end{equation*}
This shows that $\proj_U x \in N_C^U(S)$. Now $x = \proj_U x + \proj_{U^\perp} x$,
and $x, \proj_{U} x \in V \implies \proj_{U^\perp}x \in V$, 
therefore $x \in N_C^U(S) + (U^\perp  \cap  V)$.

For the inclusion ($\subseteq$), we consider any $q \in N^U_C(S)$ and 
$z \in U^\perp \cap V$. Let $y \in C \cap (U+S), s \in S$, and note that
$\langle z,y-s\rangle=0$ because $y-s  \in U$. Then 
\begin{equation*}
\langle q+z,  y-s\rangle =  \langle q, y-s\rangle \le 0.
\end{equation*}
Therefore $q + z  \in N_{C \cap (U + S)}^V(S)$, which shows the inclusion and completes the proof.
\end{proof}
\begin{lemma}[Conical projections yield supporting subspaces]
\label{loc:conical_projections_yield_supporting_subspaces.statement}
Let $C \subseteq \mathbb{R}^n$ be convex, $S \subseteq C$ non-empty, 
and $U \in \mathcal{F}(C;S)$. Then for any $K \subseteq N^U_C(S)$,
\begin{equation*}
U \cap K^\perp \in \mathcal{F}(C;S).
\end{equation*}
\end{lemma}
\begin{proof}[\hypertarget{loc:conical_projections_yield_supporting_subspaces.proof}Proof of \Cref{loc:conical_projections_yield_supporting_subspaces.statement}]

Since $K$ is finite dimensional, there is a finite number of vectors $v_1, \ldots, v_m \in K$
such that $\Span(K) = \Span(v_1, \ldots, v_m)$.
Then since $v_1, \ldots , v_m \in N_C^U(S)$, by repeated application of
\Cref{loc:nested_normals_induce_supporting_subspaces.statement}, $U \cap v_1^\perp \ldots \cap v_m^\perp$ is a supporting subspace, which is the same set as $U \cap K^\perp$.
\end{proof}
\begin{lemma}[Generated supporting subspace of one of the two convex sets]
\label{loc:generated_supporting_subspace_of_one_of_the_two_convex_sets.statement}
For convex sets $C, D \subseteq \mathbb{R}^n$, $T=T(C,D)$, $T_a = T + C  \cap D$, we have
\begin{equation*}
H_C(C \cap D) = T \cap N_C^T(C \cap T_a)^\perp.
\end{equation*}
\end{lemma}
\begin{proof}[\hypertarget{loc:generated_supporting_subspace_of_one_of_the_two_convex_sets.proof}Proof of \Cref{loc:generated_supporting_subspace_of_one_of_the_two_convex_sets.statement}]

Let $U = T \cap N_C^T(C \cap T_a)^\perp$.
we first show that $U \in \mathcal{F}(C \cap D; C)$. Note that
$T \in \mathcal{F}(C \cap D; C)$ by \Cref{loc:decomposition_of_supporting_subspaces_of_differences.statement}.
Then we can use
\Cref{loc:conical_projections_yield_supporting_subspaces.statement}
because $N_C^T(C \cap T_a) \subseteq N_C^T(C \cap D)$, which shows that $U \in \mathcal{F}(C \cap D; C)$ as well. 

In the remainder of the proof we show that the assumption of
\Cref{loc:the_generated_supporting_subspace_is_the_only_supporting_subspace_with_no_nested_normals.statement}
holds for $U$, namely that $N_C^U(C \cap D) = \{0\}$. 

We first show that $N_C^U(C \cap T_a) = \{0\}$. By
\Cref{loc:normal_cone_everywhere_is_the_orthogonal_subspace.statement}, $N_C^T(C \cap T_a) = (C \cap T_a - C \cap T_a)^\perp \cap T$. 
Also, note that by the identity in 
\Cref{loc:nested_normals_of_constrained_set.statement},
we find that $N_C^U(C \cap T_a)= N_C^T(C \cap T_a) \cap U$.
Combining these two facts,
\begin{equation*}
N_C^U(C \cap T_a) = N_C^T(C \cap T_a) \cap U = (C \cap T_a - C \cap T_a)^\perp  \cap U.
\end{equation*}
But note that $C \cap T_a - C \cap T_a \subseteq U$, therefore it fo $N_C^U(C \cap T_a)= \{ 0 \}$.  That $C \cap T_a - C \cap T_a \subseteq U$ holds because $C \cap T_a - C \cap T_a \subseteq T$ and $(C \cap T_a - C \cap T_a) \perp N_C^T(C \cap T_a)$ by \Cref{loc:normal_cone_everywhere_is_the_orthogonal_subspace.statement}. 

It only remains to show that $N_C^U(C \cap D)=N_C^U(C \cap T_a)$.  By
\Cref{loc:rint_qualification_holds_in_joint_supporting_subspace.statement},
we may pick some $x \in \rint(C \cap T_a) \cap \rint(D \cap T_a)$. Then by
\Cref{loc:normal_cones_in_the_relative_interior_are_representative.statement},
\begin{equation*}
N_{C}^U(C \cap T_a) = N_{C}^U(x) \supseteq N_C^U(C \cap D).
\end{equation*}
Then $N_C^U(C \cap D)=\{ 0 \}$, and so the result follows from \Cref{loc:the_generated_supporting_subspace_is_the_only_supporting_subspace_with_no_nested_normals.statement}. 
\end{proof}
We are now in a position to prove the analytical characterizations
in \Cref{loc:body.the_joint_supporting_subspace}.
\begin{proof}[\hypertarget{loc:the_joint_supporting_subspace_reveals_faces.proof}Proof of \Cref{loc:the_joint_supporting_subspace_reveals_faces.statement}]

From
\Cref{loc:generated_supporting_subspace_of_one_of_the_two_convex_sets.statement},
$H_C(C \cap D) = T \cap N_C^T(C \cap T_a)^\perp$. Since $(C \cap T_a - C \cap T_a) \perp N_C^T(C \cap T_a)$ by \Cref{loc:normal_cone_everywhere_is_the_orthogonal_subspace.statement}, $(C \cap T_a - C \cap T_a) \subseteq H_C(C \cap D)$, i.e.,
$H_C(C \cap D) + C \cap T_a \supseteq C \cap T_a$. But $H_C(C \cap D) + C \cap T_a = H_C(C \cap D) + C \cap D$, therefore
\begin{equation}
\label{eq:cont}
H_C(C \cap D) + C \cap D \supseteq C \cap T_a.
\end{equation}

Additionally, since we know that $T \in \mathcal{F}(C \cap D;C)$ from \Cref{loc:decomposition_of_supporting_subspaces_of_differences.statement}, by definition of the generated supporting subspace we know that
$H_C(C \cap D) \subseteq T$. Therefore, 
\begin{equation}
\label{eq:inc}
H_C(C \cap D) + C \cap D \subseteq T_a.
\end{equation}

From \Cref{eq:cont} and \Cref{eq:inc} it follows that $(H_C(C \cap D) + C\cap D) \cap C = C \cap T_a$.
Then the statement follows from 
\Cref{loc:relation_between_the_generated_face_and_the_generated_supporting_subspace.statement}.
\end{proof}
\begin{proof}[\hypertarget{loc:characterization_through_generated_faces.proof}Proof of \Cref{loc:characterization_through_generated_faces.statement}]

From \Cref{loc:the_joint_supporting_subspace_reveals_faces.statement}, we have that
$T \in \mathcal{F}(C; C \cap D)$ is a supporting subspace of $C$ that contains $C  \cap D$,
and so by \Cref{loc:generated_supporting_subspace.statement},
$H_C(C \cap D) \subseteq T$. Therefore,
for $T':= H_C(C \cap D) + H_D(C \cap D),$ $T' \subseteq T.$

For the other containment, we argue by contradiction: let $T' \subsetneq T$. Then there is a unit vector $v \in T$
with $v \perp T'$. Then $v \perp H_C(C  \cap D)$, which by
\Cref{loc:generated_supporting_subspace_of_one_of_the_two_convex_sets.statement},
implies that $v \in N_C^T(C \cap T)$. By \Cref{loc:normal_cone_everywhere_is_the_orthogonal_subspace.statement},
 $-v \in N_C^T(C \cap T)$ as well.
And since $N_C^T(C \cap T) \subseteq N_C^T(C \cap D)$, we find that $v, -v \in N_C^T(C \cap D)$. 
By a similar argument, $v, -v \in N_D^T(C \cap D)$.
But then $v \in N_C^T(C \cap D)  \cap (-N_D^T(C \cap D))$, which by \Cref{loc:decomposition_of_supporting_subspaces_of_differences.statement} means that $v \in N_{C-D}^T(0)$.
But by \Cref{loc:the_generated_supporting_subspace_is_the_only_supporting_subspace_with_no_nested_normals.statement}, $T$ cannot contain such a nested normal because by \Cref{loc:the_joint_supporting_subspace.technical}, $T=H_{C-D}(0)$. Therefore, by contradiction $T=T'$.
\end{proof}
Next, we present the proofs for the corollaries of 
\Cref{loc:characterization_through_generated_faces.statement}, starting with a simple lemma.
\begin{lemma}[Formulation of the tangent space]
\label{loc:formulation_of_the_tangent_space.statement}
For a convex set $C \subseteq \mathbb{R}^n$ and $S \subseteq C$ non-empty,
\begin{equation*}
\Span(C-C) = \Span(C-S).
\end{equation*}
\end{lemma}
\begin{proof}[\hypertarget{loc:formulation_of_the_tangent_space.proof}Proof of \Cref{loc:formulation_of_the_tangent_space.statement}]

That $\Span(C-C) \supseteq \Span(C-S)$ is trivial, so
we only show $\Span(C-C) \subseteq \Span(C-S)$.
It suffices to show that $C-C \subseteq \Span(C-S)$;
let $u, v \in C$, then for any fixed $x \in S$,
\begin{equation*}
u-v = (u-x)-(v-x) \in \Span(C-S).
\end{equation*}
\end{proof}
\begin{proof}[\hypertarget{loc:joint_supporting_subspace_from_difference_of_faces.proof}Proof of \Cref{loc:joint_supporting_subspace_from_difference_of_faces.statement}]

In this proof, we denote $F_C := F_C(C \cap D)$ and $F_D := F_D(C \cap D)$.
\begin{align}
\Span(F_C-F_D) &= \Span(F_C-F_D) + (C \cap D - C \cap D) \label{eq:diff_face:1}\\
 &= \Span(F_C - F_D + (C \cap D  - C \cap D)) \label{eq:diff_face:2}\\
 &= \Span((F_C - C \cap D) -(F_D-C \cap D)) \label{eq:diff_face:3}\\
 &= \Span(F_C - C \cap D) +\Span(F_D-C \cap D) \label{eq:diff_face:4}\\
 &= \Span(F_C - F_C) + \Span(F_D- F_D), \label{eq:diff_face:5}\\
&= H_C(C \cap D) + H_D(C \cap D). \label{eq:diff_face:6}\\
\end{align}
where \Cref{eq:diff_face:1} holds because 
\begin{equation*}
(C \cap D - C \cap D) \subseteq F_C - F_D \subseteq  \Span(F_C-F_D),
\end{equation*}
and \Cref{eq:diff_face:5} follows from \Cref{loc:formulation_of_the_tangent_space.statement}.
The result then follows from 
\Cref{loc:characterization_through_generated_faces.statement}.
\end{proof}
\begin{proof}[\hypertarget{loc:joint_supporting_subspace_as_affine_hull.proof}Proof of \Cref{loc:joint_supporting_subspace_as_affine_hull.statement}]

Throughout this proof, we omit the argument $C \cap D$, i.e., 
$H_C := H_C(C \cap D)$, and similarly
for $H_D, F_C, F_D$. 
We also define $\bar{F}_C = F_C-C \cap D$ and $\bar{F}_D = F_D - C \cap D$.
By \Cref{loc:formulation_of_the_tangent_space.statement},
\begin{equation*}
H_C= \Span(F_C-F_C)= \Span(\bar{F}_C).
\end{equation*}
With this, starting with $T_a$ as in \Cref{loc:characterization_through_generated_faces.statement},
\begin{align*}
&T_a(C, D) = H_C + H_D + C \cap D\\  
&= \Span(\bar{F}_C) + \Span(\bar{F}_D) + C \cap D\\
&= \Span(\Span(\bar{F}_C) \cup \Span(\bar{F}_D)) + C \cap D\\
&= \Span(\bar{F}_C \cup \bar{F}_D) + C \cap D
= \aff(\bar{F}_C \cup \bar{F}_D) + C \cap D\\
 &= \aff(\bar{F}_C \cup \bar{F}_D + C \cap D) 
 = \aff(F_C \cup F_D).
\end{align*}
\end{proof}
\subsection{Second characterization}
\label{loc:body.proofs.second_characterization}
We can now state the second characterization in full.
\begin{theorem}[Iterative bilateral facial reduction]
\label{loc:iterative_bilateral_facial_reduction.statement}
Let $C, D \subseteq \mathbb{R}^n$ be convex sets and 
$S \subseteq C \cap D$ non-empty. Beginning with $T_0  = \mathbb{R}^n$, let
\begin{equation*}
T_{i+1} = T_i \cap (N^{T_i}_{C}(S) \cap [-N^{T_i}_{D}(S)])^\perp.
\end{equation*}
Let $\ell$ be the first index for which 
$T_{\ell+1} = T_\ell$. Then:
\begin{enumerate}
\item $T_\ell =  T(C, D)$.
\item $\ell \leq n$.
\end{enumerate}
\end{theorem}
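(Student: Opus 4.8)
The plan is to recognize the recursion as an iteration inside the lattice $\mathcal{F}(C-D;0)$ and to read off the fixed point from \Cref{loc:the_generated_facial_subspace_is_the_only_facial_subspace_with_no_nested_normals.statement}. The central bookkeeping claim, which I would prove by induction on $i$, is that $T_i \in \mathcal{F}(C-D;0)$ (equivalently, by \Cref{loc:characterization_of_the_conditional_normals_of_a_difference_of_sets.statement}, $T_i \in \mathcal{F}(C;S)\cap\mathcal{F}(D;S)$, so in particular $S-S\subseteq T_i$ and all nested normals appearing below are well defined) and that the update can be rewritten as
\[
T_{i+1} = T_i \cap N^{T_i}_{C-D}(0)^{\perp}.
\]
The base case $T_0=\mathbb{R}^n$ is immediate, using $C\cap D\neq\emptyset$ (so $0\in C-D$) together with \Cref{loc:normal_cone_of_the_difference_of_convex_sets.statement}. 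For the inductive step, \Cref{loc:characterization_of_the_conditional_normals_of_a_difference_of_sets.statement} applied with $U=T_i$ gives $N^{T_i}_C(S)\cap[-N^{T_i}_D(S)] = N^{T_i}_{C-D}(0)$, which is exactly the cone being perped in the recursion; then \Cref{loc:conical_projections_yield_facial_subspaces.statement}, applied with the convex set $C-D$, the set $\{0\}$, the facial subspace $T_i$, and $K=N^{T_i}_{C-D}(0)$, yields $T_{i+1}=T_i\cap K^{\perp}\in\mathcal{F}(C-D;0)$, closing the induction.

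Given this, part (2) is a dimension count. The sequence $(T_i)$ is nested decreasing, so $(\dim T_i)$ is a non-increasing sequence of non-negative integers and is eventually constant; since $T_{i+1}$ depends only on $T_i$, once $T_{i+1}=T_i$ the sequence is stationary, so the first such index $\ell$ exists. For every $i<\ell$ we have $T_{i+1}\neq T_i$, hence $N^{T_i}_{C-D}(0)\neq\{0\}$, and choosing any nonzero $v$ in it (note $v\in T_i$, since nested normals in $T_i$ lie in $T_i$) gives $T_{i+1}\subseteq T_i\cap v^{\perp}\subsetneq T_i$, so $\dim T_{i+1}\le \dim T_i - 1$. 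Iterating from $\dim T_0=n$ yields $\dim T_\ell\le n-\ell$, and $\dim T_\ell\ge 0$ forces $\ell\le n$.

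For part (1), evaluate the recursion at the fixed point: $T_\ell = T_{\ell+1} = T_\ell\cap N^{T_\ell}_{C-D}(0)^{\perp}$ forces $N^{T_\ell}_{C-D}(0)\subseteq T_\ell^{\perp}$, while by \Cref{loc:nested_normals.statement} $N^{T_\ell}_{C-D}(0)\subseteq T_\ell$; hence $N^{T_\ell}_{C-D}(0)=\{0\}$. Since $T_\ell\in\mathcal{F}(C-D;0)$, \Cref{loc:the_generated_facial_subspace_is_the_only_facial_subspace_with_no_nested_normals.statement} identifies $T_\ell$ as the unique such subspace, namely $H_{C-D}(0)$, which equals $T(C,D)$ by \Cref{loc:the_joint_facial_subspace.technical}. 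As a byproduct this shows the limit is independent of the choice of $S$.

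The main obstacle is the inductive invariant itself: the whole argument rests on $T_i$ never leaving $\mathcal{F}(C-D;0)$, which is what licenses translating the $C$-and-$D$ update into a statement about nested normals of $C-D$ at $0$ and invoking \Cref{loc:conical_projections_yield_facial_subspaces.statement}. The subtlety is that one must perp out the entire cone $N^{T_i}_{C-D}(0)$ at once rather than a single exposing normal, so \Cref{loc:conical_projections_yield_facial_subspaces.statement} (not merely \Cref{loc:nested_normals_induce_facial_subspaces.statement}) is the right tool, and \Cref{loc:characterization_of_the_conditional_normals_of_a_difference_of_sets.statement} must be reapplied at each $T_i$; hence the two facts genuinely have to be carried together through the induction. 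Everything after that is elementary.
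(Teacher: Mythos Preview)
Your proposal is correct and follows essentially the same route as the paper: rewrite the recursion via \Cref{loc:characterization_of_the_conditional_normals_of_a_difference_of_sets.statement} as $T_{i+1}=T_i\cap N^{T_i}_{C-D}(0)^\perp$, use \Cref{loc:conical_projections_yield_facial_subspaces.statement} to keep $T_i\in\mathcal{F}(C-D;0)$, do the dimension count for $\ell\le n$, and identify the fixed point via \Cref{loc:the_generated_facial_subspace_is_the_only_facial_subspace_with_no_nested_normals.statement}. Your write-up is in fact more careful than the paper's own proof (you argue explicitly that $\ell$ exists, and that $N^{T_\ell}_{C-D}(0)\subseteq T_\ell\cap T_\ell^\perp$ at the fixed point), and you correctly place the nested normals of $C-D$ at $0$ rather than at $S$.
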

\begin{proof}[\hypertarget{loc:iterative_bilateral_facial_reduction.proof}Proof of \Cref{loc:iterative_bilateral_facial_reduction.statement}]

First, $(N^{T_i}_{C}(S) \cap [-N^{T_i}_{D}(S)]) = N^{T_i}_{C-D}(S)$ by \Cref{loc:decomposition_of_supporting_subspaces_of_differences.statement}.
Applying \Cref{loc:conical_projections_yield_supporting_subspaces.statement} at each step, we find that $\forall i  \in \{ 0, \ldots, \ell \},\, T_i \in \mathcal{F}(C-D;S)$.
Finally, note that whenever $N^{T_i}_{C}(S) \cap [-N^{T_i}_{D}(S)] \neq \{0\}$,
$\dim(T_{i+1})\le \dim(T_i) - 1$. Therefore, the dimension may be reduced
at most $n$ times, which implies that $\ell \le n$. On the other hand, the fact that $T_\ell = T_{\ell+1}$
implies that $N^{T_\ell}_{C-D}(S) = \{0\}$.
Then from
\Cref{loc:the_generated_supporting_subspace_is_the_only_supporting_subspace_with_no_nested_normals.statement}
it follows that $T_\ell = H_{C-D}(S)$, which is $T(C,D)$ from \Cref{loc:the_joint_supporting_subspace.technical}.
\end{proof}
\begin{proof}[\hypertarget{loc:simplified_iterative_bilateral_facial_reduction.proof}Proof of \Cref{loc:simplified_iterative_bilateral_facial_reduction.statement}]

We consider the formula
\begin{equation*}
T_{i+1} = T_i \cap (N^{T_i}_{C}(x) \cap [-N^{T_i}_{D}(x)])^\perp.
\end{equation*}
from  \Cref{loc:iterative_bilateral_facial_reduction.statement}.
We show that the r.h.s. is the same set as
\begin{equation*}
T_i \cap (N_{C \cap (T_i + x)}(x) \cap [-N_{D \cap (T_i+x)}(x)])^\perp.
\end{equation*}
We use a substitution as in \Cref{loc:nested_normals_of_constrained_set.statement} with $V = \mathbb{R}^n$.
\begin{align*}
&T_i \cap (N_{C \cap (T_i + x)}(x) \cap [-N_{D \cap (T_i+x)}(x)])^\perp\\
= &T_i \cap ((N^{T_i}_{C}(x) + T_i^\perp) \cap [-N^{T_i}_{D}(x) + T_i^\perp])^\perp\\
= &T_i \cap (N^{T_i}_{C}(x) \cap [-N^{T_i}_{D}(x)] + T_i^\perp)^\perp\\
= &T_i \cap (N^{T_i}_{C}(x) \cap [-N^{T_i}_{D}(x)])^\perp.
\end{align*}
The last equality holds because $N^{T_i}_{C}(x) \cap [-N^{T_i}_{D}(x)] \subseteq T_i$, so we may use the
identity that $T_i \cap (S + T_i^\perp)^\perp = T_i \cap S^\perp$ 
for any set $S \subseteq T_i$.
\end{proof}
\subsection{Conclusion}
\label{loc:body.proofs.conclusion}
In this work we define and characterize the joint supporting subspace $T_a$, which exists between any two
convex sets. By defining this affine subspace from the domains of two convex functions, and then reducing each function to it, qualification conditions are guaranteed to hold. We obtain a number of characterizations for $T_a$ by bridging between the facial structure of the difference of two convex sets $C-D$ and the facial structure of the individual convex sets. By doing this, we show that ``pathology on the boundary" of convex functions can be tamed by considering the facial structure of their domains.

Future work avenues include specifying and implementing efficient reduction
algorithms for convex functions, and pairing them with optimization solvers to obtain robust algorithms with strong theoretical guarantees.
\subsection{Acknowledgements}
\label{loc:body.proofs.acknowledgements}
The author would like to thank Sharvaj Kubal, Naomi Graham, prof. Yaniv Plan, and prof.
Ozgur Yilmaz for many fruitful discussions. A special thanks to prof. Michael Friedlander for 
pointing out that a generalization of the Fenchel-Rockafellar dual and
optimality conditions would be
a promising research direction. This paper was produced with the Latex Exporter plugin in Obsidian.
\section*{Declarations}
The author is supported by the University of British Columbia.
\printbibliography
\end{document}